\setlist[enumerate,1]{label=\textup{(\roman*)}}% ensure enumerates in theorems are upright
\newlength{\bibitemsep}\setlength{\bibitemsep}{.2\baselineskip plus .05\baselineskip minus .05\baselineskip}
\newlength{\bibparskip}\setlength{\bibparskip}{0pt}
\let\oldthebibliography\thebibliography
\renewcommand\thebibliography[1]{%
  \oldthebibliography{#1}%
  \setlength{\parskip}{\bibitemsep}%
  \setlength{\itemsep}{\bibparskip}%
}
\newcommand{\NN}{\mathbb{N}}
\newcommand{\CC}{\mathbb{C}}
\newcommand{\Q}{\mathcal{Q}}
\newcommand{\A}{\mathfrak{a}}
\newcommand{\ev}{\text{ev}}
\newcommand{\dom}{\text{dom}}
\newcommand{\ran}{\text{ran}}
\newcommand{\ep}{\varepsilon}
\newcommand{\id}{\text{Id}}
\newcommand{\iso}{\text{Iso}}
\newcommand{\I}{\mathcal{I}}
\newcommand{\W}{\mathcal{W}}
\newcommand{\J}{\mathcal{J}}
\newcommand{\s}{\mathcal{S}}
\newcommand{\B}{\mathcal{B}}
\newcommand{\X}{\mathsf{X}}
\newcommand{\OX}{\mathcal{O}_\X}
\newcommand{\g}{\mathcal{G}}
\renewcommand{\l}{\mathcal{L}}
\newcommand{\Ef}{\widehat E_0}
\newcommand{\Eu}{\widehat E_\infty}
\newcommand{\Et}{\widehat E_{\text{tight}}}
\newcommand{\Ct}{C^*_{\text{tight}}}
\newcommand{\gt}{\mathcal{G}_{\text{tight}}}
\newcommand{\fs}{\subseteq_{\text{fin}}}
\newcommand{\vp}{\varphi}
\newcommand{\f}{\mathcal{F}}
\newcommand{\go}{\g^{(0)}}
\setlist[enumerate]{itemsep=-1pt}
\setlist[itemize]{itemsep=-1pt}
\theoremstyle{definition}
\newtheorem{theo}{Theorem}[section]
\newtheorem{rmk}[theo]{Remark}
\newtheorem{lem}[theo]{Lemma}
\newtheorem{prop}[theo]{Proposition}
\date{}
\begin{document}

%\title{The C*-algebra of a cancellative semigroup}
\title{A new uniqueness theorem for the tight C*-algebra of an inverse semigroup}
\author{Charles Starling\thanks{Partially supported by an NSERC Discovery Grant and an internal Carleton University research grant. \texttt{cstar@math.carleton.ca}} }
\maketitle

\begin{abstract}
We prove a new uniqueness theorem for the tight C*-algebras of an inverse semigroup by generalising the uniqueness theorem given for \'etale groupoid C*-algebras by Brown, Nagy, Reznikoff, Sims, and Williams. We use this to show that in the nuclear and Hausdorff case, a *-homomorphism from the boundary quotient C*-algebra of a right LCM monoid is injective if and only if it is injective on the subalgebra generated by the core submonoid. We also use our result to clarify the identity of the tight C*-algebra of an inverse semigroup we previously associated to a subshift and erroneously identified as the Carlsen-Matsumoto algebra.
\end{abstract}

MSC Classes: 46L05, 20M18, 18B40

Keywords: LCM semigroup, inverse semigroup, tight representation, groupoid,
C*-algebra, uniqueness, subshift

\section{Introduction}

An inverse semigroup is a semigroup $S$ where every element $s$ has a unique inverse $s^*$ in the sense that $s^*ss^* = s^*$ and $ss^*s = s$, and a representation of $S$ in a C*-algebra is a map $\rho:S\to A$ that is multiplicative and sends inverses to adjoints. Following Paterson \cite{Pa02}, Exel \cite{Ex08} defined a C*-algebra $\Ct(S)$ generated by a copy of $S$ which is universal for what he called {\em tight representations}, which enforce a kind of nondegeneracy condition. This construction has been shown to unify many classes of C*-algebras: graph, $k$-graph, and Cuntz-Krieger algebras \cite{Ex08}, Katsura algebras, Nekrashevych algebras \cite{EP17} , AF algebras \cite[Remark~6.5]{LS17}, \cite[Theorem~5.1]{St16}, tiling algebras \cite{EGS12}, Cuntz-Li algebras of certain semigroups \cite{StLCM}, and many others have been shown to be isomorphic to $\Ct(S)$ for some inverse semigroup $S$ defined from the underlying combinatorial data. If one identifies a C*-algebra $A$ with elements satisfying the same relations as those in $\Ct(S)$, then the universal property implies the existence of a $*$-homomorphism $\pi_\rho: \Ct(S)\to A$. A natural problem which then arises is determining when $\pi_\rho$ is injective; this is the main problem we consider in this paper.  

In some contexts, it is possible that injectivity of $\pi_\rho$ is equivalent to the injectivity of $\pi_\rho$ on some subset of $S$. A famous example is the {\em Cuntz-Krieger uniqueness theorem} for graph C*-algebras, which as noted above are algebras which can be realized as $\Ct(S)$ for some $S$. To illustrate our perspective, we translate the Cuntz-Krieger uniqueness theorem to the language of tight representations. Given a row-finite directed graph $E$ with no sources with edge set $E^1$ and vertex set $E^0$, one says that a set of partial isometries $\{s_e\}_{e\in E^1}$ and mutually orthogonal projections $\{p_v\}_{v\in E^0}$ is a {\em Cuntz-Krieger $E$-family} if they satisfy 
\begin{enumerate}
	\item[(CK1)] $s_es_e^* = p_{s(e)}$ for all $e\in E^1$
	\item[(CK2)] $\sum_{e\in r^{-1}(v)} s_es_e^* = p_v$ for all $v\in E^0$.  
\end{enumerate}
These are the {\em Cuntz-Krieger relations}.  One can associate to $E$ an inverse semigroup $S_E$ generated by elements $\{S_e\}_{e\in E^1}$ and mutually orthogonal idempotents  $\{P_v\}_{v\in E^0}$ subject to (CK1) but not (CK2) (because there is no ``sum'' in a general inverse semigroup). Then $\rho$ is a representation of $S_E$ if and only if the images of the generators satisfy (CK1), and will be tight if and only if the images of the generators satisfy (CK2). Then saying that $\Ct(S_E)$ is universal for tight representations is the same as saying it is universal for generators satisfying the Cuntz-Krieger relations, and so it is isomorphic to $C^*(E)$, the {\em graph C*-algebra} for $E$. In this language, the Cuntz-Krieger uniqueness theorem says that if every cycle in $E$ has an entry, then given a tight representation $\rho$ of $S_E$ on $A$, the induced $*$-homomorphism $\pi_\rho:C^*(E)\to A$ is injective if and only if $\rho(P_v)\neq 0$ for all $v\in E^0$. This is a powerful tool for analyzing these algebras, as the potentially complicated task of proving injectivity of a $*$-homomorphism is reduced to checking injectivity on a subset of the generators.

The original proof of the Cuntz-Krieger uniqueness theorem \cite{KPR98} uses the realization of $C^*(E)$ as the reduced C*-algebra $C^*_r(\g_E)$ of a certain groupoid $\g_E$ associated to $E$. The condition that every cycle has an entry was seen to be equivalent to $\g_E$ being {\em effective}, which means that the interior $\iso(\g_E)^\circ$ of the isotropy subgroupoid of $\g_E$ is equal to the unit space $\g_E^{(0)}$. A key step in their proof is showing that a $*$-homomorphism from the reduced C*-algebra of a locally compact Hausdorff effective \'etale 0-dimensional groupoid $\g$ is injective if and only if it is injective on $C_0(\go)$. This step together with the final theorem suggest that perhaps the assumption that $\g_E$ is effective could be dropped, and that injectivity of a $*$-homomorphism from $C^*_r(\g)$ is equivalent to injectivity on the subalgebra $C_r^*(\iso(\g)^\circ)$. This is the content of the seminal result of Brown, Nagy, Reznikoff, Sims, and Williams \cite{BNRSW16}, who were even able to prove this without the assumption that $\g$ is 0-dimensional. This has since been applied in many contexts \cite{CN20, LM17, LMS19, CEP18}, and generalized in \cite{Arm22} to twisted groupoid C*-algebras.

Returning to $\Ct(S)$, one has that it can always be realized as the C*-algebra of an \'etale groupoid $\gt(S)$, and so \cite[Theorem~3.1]{BNRSW16} applies. In \cite{LM17}, Milan and LaLonde applied it to $\Ct(S)$ (and indeed other inverse semigroup algebras) and showed that as long as $S$ is {\em 0-disjunctive} and $\Ct(S)$ is nuclear, then any $*$-homomorphism from $\Ct(S)$ is injective if and only if it is injective on the subalgebra generated by the {\em centralizer} $Z(S)$ of $S$ (see Lemma~\ref{lem:0dis} and the discussion before it for the definitions).

 We revisit the line of work started in \cite{LM17} because there are some classes of inverse semigroups of interest (for example, the ones we consider in Section~\ref{sec:LCM}) which are not necessarily 0-disjunctive. We identify an inverse subsemigroup $S^\iso\subseteq S$ which takes the role of $Z(S)$ in \cite{LM17}, and show in Theorem~\ref{thm:isguniqueness} that in the nuclear case, a $*$-homomorphism from $\Ct(S)$ is injective if and only if it is injective on the subalgebra generated by $S^\iso$. As in \cite{LM17}, we must also put a condition on $S$ which guarantees the tight groupoid is Hausdorff, which we call \eqref{eq:H}. To prove our theorem, we first need a slight generalization of \cite[Theorem~3.1]{BNRSW16}. Briefly, our Theorem~\ref{th:uniqueness} says that one can check injectivity on certain open subgroupoids $\f\subseteq \iso(\g)^\circ$ as long as there are enough units with full isotropy in $\f$. We also give a condition on $S$ which guarantees that $\iso(\gt(S))^\circ$ is closed, which by \cite[Proposition~4.1]{BNRSW16} guarantees the existence of a faithful conditional expectation onto $\Ct(S^\iso)$ (in the nuclear case). We give a formula of this map on the generators, Proposition~\ref{prop:CondExp}.
 
 We have two main applications. The first is that of Cuntz-Li algebras associated to LCM monoids. In \cite{Li12}, Li generalized the work of Nica \cite{Ni92} to define a universal C*-algebra $C^*(P)$ and a boundary quotient $\Q(P)$ to a left-cancellative semigroup $P$. A class of semigroups of particular interest are the right LCM semigroups, whose C*-algebras have been considered in many works \cite{ABLS19, BOS18, BLS16, BLS18, BS16, LL20, LL21, LiB19, NS19, Stam15, Stam17, StLCM}. In the monoid case, $P$ is right LCM if $pP\cap qP$ is either empty or equal to $rP$ for some $r\in P$ (here, $r$ is a right least common multiple of $p$ and $q$). Then the {\em core submonoid} is the set $P_c$ of elements $p$ such that $pP\cap qP$ is nonempty for all $q\in P$. This submonoid was defined in \cite{CL07, StLCM} and features prominently in many of the works mentioned above. We prove in Theorem~\ref{th:LCMmain} that in the nuclear and Hausdorff case that a $*$-homomorphism from $\Q(P)$ is injective if and only if it is injective on $\Q_c(P)$, the subalgebra generated by $P_c$. 
 
 Our second main application is to subshift algebras. If $\A$ is a finite alphabet and $\A^\NN$ is the Cantor space of one-sided sequences in $\A$, a {\em subshift} is a closed subspace $\X\subseteq \A^\NN$ which is invariant under the map $\sigma$ which shifts a sequence one position to the left. There are many C*-algebras associated to such objects, defined by Matsumoto and later Carlsen \cite{Ca08, CM04,CS07, Ma97, Ma99}. Two of these C*-algebras feature in this work: $\OX$, a certain universal C*-algebra defined in \cite{Ca08}, and the C*-algebra generated by translation operators $S_a:\ell^2(\X)\to \ell^2(\X)$ given by $S_a(\delta_x) = \delta_{ax}$ if $ax\in \X$ and 0 otherwise, for $a\in \A$. In \cite{ShiftISG}, we associated an inverse semigroup $\s_\X$ to a subshift and showed (erroneously) that $\Ct(\s_\X)$ is isomorphic to $\OX$. In a submitted corrigendum, we show this isomorphism holds when $\X$ is assumed to satisfy Matsumoto's {\em condition (I)}. In this paper, we can use our Theorem~\ref{thm:isguniqueness} to show that whether one assumes condition (I) or not, we can conclude that $\Ct(\s_\X)$ is isomorphic to $C^*(S_a: a\in \A)$, see Theorem~\ref{th:subshiftisg}. 
 
 This paper is organized in the following manner. In Section~\ref{sec:GpdUniqueness} we give background on \'etale groupoids and prove our generalization (Theorem~\ref{th:uniqueness}) of the uniqueness theorem proved by Brown, Nagy, Reznikoff, Sims, and Williams. In Section~\ref{sec:ISG} we give background on inverse semigroups and define their tight groupoids and corresponding tight C*-algebras, then use Theorem~\ref{th:uniqueness} to prove our uniqueness theorem for tight C*-algebras of inverse semigroups (Theorem~\ref{thm:isguniqueness}). We also give conditions on $S$ which imply existence of a conditional expectation onto $\Ct(S^\iso)$ via \cite[Proposition~4.1]{BNRSW16}. In Section~\ref{sec:LCM} we apply our results to the boundary quotient C*-algebras of right LCM monoids, and in Section~\ref{sec:subshifts} we apply them to subshift C*-algebras.

\section{A slightly improved uniqueness theorem for groupoid C*-algebras}\label{sec:GpdUniqueness}

We will use the following general notation. If $X$ is a set and $U\subseteq X$, let Id$_U$ denote the map from $U$ to $U$ which fixes every point, and let $1_{U}$ denote the characteristic function on $U$, i.e. $1_U: X\to \CC$ defined by $1_U(x) = 1$ if $x\in U$ and $1_U(x) = 0$ if $x\notin U$. If $F$ is a finite subset of $X$, we write $F\fs X$. If $X$ is a topological space and $U\subseteq X$, we write $U^\circ$ for the interior of $U$. 

A {\em groupoid} is a set $\g$ equipped with a partially defined product such that every element has a ``local'' inverse. Specifically, there exists a set $\g^{(2)}\subseteq \g\times \g$, called the set of {\em composable pairs} on which there is a product $\g^{(2)}\ni (\gamma,\delta) \mapsto \gamma\delta\in \g$. Furthermore, this product is associative (wherever defined) and for each $\gamma\in \g$ there exists a (unique) element $\gamma^{-1}\in \g$ such that $\gamma^{-1}\gamma\eta = \eta$ and $\delta\gamma\gamma^{-1} = \delta$ whenever $(\gamma, \eta), (\delta, \gamma)\in \g^{(2)}$. The set of {\em units} is $\go = \{\gamma\gamma^{-1}: \gamma\in \g\} = \{\gamma^{-1}\gamma: \gamma\in \g\}$, and the maps $r(\gamma) = \gamma\gamma^{-1}$ and $d(\gamma) = \gamma^{-1}\gamma$ from $\g$ to $\go$ are called the {\em range} and {\em source} maps respectively.\footnote{In most sources the source map is denoted $s$, but this variable is used so often in inverse semigroup literature we reserve it for elements of an inverse semigroup.} A subset $\f\subseteq \g$ is called a {\em subgroupoid} if it is closed under the product and inverse.

For $u, v\in \go$ and a subset $\l\subseteq \g$ we use the notation
\[
\l_u = d^{-1}(u)\cap \l,\hspace{1cm}\l^u = r^{-1}(u)\cap \l,\hspace{1cm}\l_u^v = \l^v\cap \l_u.
\]
The set $\g_u^u$ is a group with unit $u$ and is called the {\em isotropy group} at $u$, and the set $\iso(\g):=\cup_{u\in \go}\g_u^u$ is called the {\em isotropy group bundle} of $\g$. It is a subgroupoid of $\g$ containing the units. 

A groupoid $\g$ with a topology is called a {\em topological groupoid} if the inverse and product maps are continuous (where $\g^{(2)}$ is given the product topology). A topological groupoid is called {\em \'etale} if $\go$ is Hausdorff and the map $r:\g\to \go$ is a local homeomorphism, i.e. for each $\gamma\in \g$ there exists an open set $B$ around $\gamma$ such that $r(B)$ is open in $\go$ and $r: B\to r(B)$ is a homeomorphism. One sees that this implies $d$ is a local homeomorphism as well. Any open set $B$ on which $r$ is a homeomorphism between open sets is called a {\em bisection}. The unit space of an \'etale groupoid is always open, and will be closed if and only if $\g$ is Hausdorff.

All the groupoids considered in this work will be locally compact, Hausdorff, and \'etale. 

We now describe the C*-algebras associated to such groupoids, as defined in \cite{R80} and described in \cite[Sections~3.2--3.3]{Sim20}. If $\g$ is a locally compact Hausdorff groupoid, let $C_c(\g)$ denote the compactly supported continuous functions from $\g$ to $\CC$. This is a complex $*$-algebra when given pointwise sum and scalar multiplication, and the operations
\[
fg(\gamma) = \sum_{\gamma_1\gamma_2 = \gamma}f(\gamma_1)g(\gamma_2), \hspace{1cm}f^*(\gamma) = \overline{f(\gamma^{-1})}.
\] 
For $u\in \go$, let $\pi_u: C_c(\g)\to \B(\ell^2(\g_u))$ denote the representation of $C_c(\g)$ given by
\[
\pi_u(f)\delta_\gamma = \sum_{\alpha\in \g_{r(\gamma)}}f(\alpha)\delta_{\alpha\gamma}, \hspace{1cm}f\in C_c(\g)
\]
Then the {\em reduced C*-algebra of $\g$}, denoted $C^*_r(\g)$, is the completion of the image of $C_c(\g)$ under $\bigoplus_{u\in\go}\pi_u$. The C*-norm on this C*-algebra, called the {\em reduced norm}, is given on elements of $C_c(\g)$ by $\|f\|_r = \sup_{u\in\go}\{\|\pi_u(f)\|\}$. There is another C*-algebra associated to $\g$, called the {\em full C*-algebra} of $\g$ which has $C_c(\g)$ as a dense *-subalgebra and which is universal for representations of $C_c(\g)$. The C*-norm on $C^*(\g)$, called the {\em full norm}, is given on elements of $C_c(\g)$ by $\|f\| = \sup\{\|\pi(f)\|: \pi\text{ is a representation of }C_c(\g)\}$. Evidently $\bigoplus_{u\in\go}\pi_u$ is a representation of $C_c(\g)$, so the universal property of $C^*(\g)$ gives a surjective $*$-homomorphism $\Lambda: C^*(\g)\to C^*_r(\g)$ called the {\em left regular representation}. If $\Lambda$ is an isomorphism, we say $\g$ satisfies {\em weak containment}; this holds in particular if $\g$ is {\em amenable} \cite{AR00} but can also hold in some cases when $\g$ is not amenable \cite{AF18, Wi15}.

A subset $X\subseteq \go$ is called {\em invariant} if $d(r^{-1}(X)) = X$. If $X$ is closed and invariant, then $\g^X_X := \bigcup_{x,y\in X}\g_x^y$ is a groupoid with unit space $X$, and there exists a surjective $*$-homomorphism $q_X: C^*_r(\g)\to C^*_r(\g_X^X)$ given on $C_c(\g)$ by function restriction. When $X = \{u\}$ is a singleton and is invariant, we denote this quotient map $q_u: C^*_r(\g)\to C^*_r(\g_u^u)$. There is also a corresponding quotient map between the full C*-algebras. If $\{u\}$ is invariant and we have a subalgebra $A\subseteq C^*_r(\g)$  with a state $\vp$, we will say the state {\em factors through} $C^*_r(\g_u^u)$ if there is a state $\psi$ on $C^*_r(\g_u^u)$ such that $\vp = \psi\circ q_u$. 

Let $\g$ be a locally compact Hausdorff \'etale groupoid. If $\f\subseteq \g$ is an open subgroupoid, then there are injective $*$-homomorphisms $\iota_{\f, r}: C^*_r(\f)\to C^*_r(\g)$ and $\iota_\f: C^*(\f)\to C^*(\g)$ such that for all $f\in C_c(\f)$, $\iota_{\f, r}(f)$ and $\iota_\f(f)$ are both the function obtained by extending $f$ to $\g$ by defining it to be 0 outside of $\f$.  In this section we will prove the following generalization of \cite[Theorem~3.1]{BNRSW16}.

\begin{theo}
\label{th:uniqueness}
Let $\g$ be a locally compact Hausdorff \'etale groupoid, and let $\f\subseteq \iso(\g)^\circ$ be an open subgroupoid containing the unit space. 
\end{theo}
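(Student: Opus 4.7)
The plan is to generalize \cite[Theorem~3.1]{BNRSW16} by allowing $\f$ to replace $\iso(\g)^\circ$ under the hypothesis that the set $U_\f := \{u\in\go : \g_u^u\subseteq\f\}$ is dense in $\go$ (this is my reading of ``enough units with full isotropy in $\f$''). The expected conclusion is that a $*$-homomorphism $\pi: C^*_r(\g)\to B$ is injective if and only if $\pi\circ\iota_{\f,r}: C^*_r(\f)\to B$ is injective. One direction is automatic from injectivity of $\iota_{\f,r}$, so all the content lies in the converse.

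For the converse I would argue by contrapositive: given a nonzero $a\in C^*_r(\g)$ with $\pi(a)=0$, produce a nonzero $b\in C^*_r(\f)$ with $\pi(\iota_{\f,r}(b))=0$. Replacing $a$ by $a^*a$ I may assume $a\geq 0$ with $\|a\|=1$, and approximate $a$ by $a_0\in C_c(\g)$. The key ingredient is a cutdown lemma in the style of \cite[Lemma~3.3]{BNRSW16}: for each $\varepsilon>0$, produce $h\in C_c(\go)^+$ with $0\leq h\leq 1$ such that $hah$ lies within $\varepsilon$ of $\iota_{\f,r}(b_\varepsilon)$ for some $b_\varepsilon\in C^*_r(\f)$ of norm bounded below, say by $1/2$. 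Granting this, $\pi(hah)=0$ forces $\|\pi(\iota_{\f,r}(b_\varepsilon))\|\leq O(\varepsilon)$, and a limiting/compactness argument produces a nonzero $b\in C^*_r(\f)$ with $\pi(\iota_{\f,r}(b))=0$.

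The main obstacle is carrying out the cutdown using only units in $U_\f$. Following the BNRSW strategy, I would decompose the support of $a_0$ as a finite union of open bisections $B_1,\ldots,B_n$, and select a distinguished unit $u_0$ at which the portion of $a_0$ supported above the isotropy group over $u_0$ has norm close to the norm of $a_0\cdot 1_{\iso(\g)^\circ}$. In \cite{BNRSW16} one is free to pick any such $u_0\in\go$; here I must choose $u_0\in U_\f$, which is exactly where the density hypothesis enters. A bump function $h$ supported in a sufficiently small neighborhood $V$ of $u_0$ forces $hah$ to be supported on the portions of the $B_i$ passing through $\g_{u_0}^{u_0}\subseteq\f$; shrinking $V$ further, and using openness of $\f$ together with continuity of the range and source maps, these portions can be arranged to lie entirely inside $\f$.

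The remaining norm bookkeeping (showing the cutdown $b_\varepsilon$ has $\|b_\varepsilon\|$ bounded below uniformly in $\varepsilon$) is essentially identical to that in \cite[Section~3]{BNRSW16} once $u_0$ has been chosen in $U_\f$. I therefore expect no new technical difficulty beyond verifying that each step of the BNRSW argument still goes through when the selection of units is restricted to $U_\f$, with density of $U_\f$ playing the role previously played by the fact that every unit lies above itself in $\iso(\g)^\circ$.
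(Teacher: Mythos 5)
Your reading of the hypothesis is right: your $U_\f=\{u:\g_u^u\subseteq\f\}$ coincides with the paper's $X_\f=\{u:\f_u^u=\g_u^u\}$, and the cutdown step you describe (choose $u_0\in X_\f$, bump function $h$ with $h(u_0)=\|h\|=1$, shrink the bisections carrying $a_0$ so that $ha_0h\in C_c(\f)$) is exactly the paper's Lemma~\ref{lem:BNRSW3.5}, i.e.\ the BNRSW cutdown with $\iso(\g)^\circ$ replaced by $\f$ and the unit forced into $X_\f$. Where you diverge is in how the cutdown is exploited. The paper first proves a unique-state-extension statement (part (i) of the theorem): every state on $\iota_{\f,r}(C^*_r(\f))$ factoring through $C^*_r(\g_u^u)$ for $u\in X_\f$ extends uniquely to $C^*_r(\g)$; it then feeds the pure states $\psi\circ q_u$ into \cite[Theorem~3.2]{BNRSW16} and uses the faithful conditional expectation $\Phi_\g:C^*_r(\g)\to C_0(\go)$ to show the direct sum of the GNS representations of the extensions is faithful. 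You instead propose a direct norm estimate: cut down $a^*a$ near $u_0$, get $b_\varepsilon\in C^*_r(\f)$ with $\|b_\varepsilon\|$ bounded below and $\|\pi(\iota_{\f,r}(b_\varepsilon))\|=O(\varepsilon)$. This route can be made to work and is more elementary, but note that the correct source of the lower bound on $\|b_\varepsilon\|$ is the state $\ev_{u_0}\circ\Phi_\f$ together with $\Phi_\g\circ\iota_{\f,r}=\Phi_\f$ and the bimodule property $\ev_{u_0}\circ\Phi_\g(ha^*ah)=\Phi_\g(a^*a)(u_0)$; accordingly $u_0$ must be chosen, using density of $X_\f$ and faithfulness of $\Phi_\g$, so that $\Phi_\g(a^*a)(u_0)$ is bounded away from $0$ --- not by reference to ``the norm of $a_0\cdot 1_{\iso(\g)^\circ}$'', which is not even a continuous function when $\iso(\g)^\circ$ is not closed.

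Two concrete problems remain. First, the theorem has two parts, and your proposal only addresses the injectivity criterion; the unique-state-extension statement (part (i)) is also asserted and is not touched by your argument, so as written the proposal proves a strictly weaker result. Second, your closing step fails as stated: the unit ball of $C^*_r(\f)$ is not norm compact, so no ``limiting/compactness argument'' will extract from the family $(b_\varepsilon)$ a single nonzero $b$ with $\pi(\iota_{\f,r}(b))=0$. The fix is standard and makes the contrapositive unnecessary: assume $\pi$ is injective on $\iota_{\f,r}(C^*_r(\f))$, hence isometric there; then $\|b_\varepsilon\|=\|\pi(\iota_{\f,r}(b_\varepsilon))\|\leq\|\iota_{\f,r}(b_\varepsilon)-ha^*ah\|_r<\varepsilon$, contradicting the lower bound once $\varepsilon<\Phi_\g(a^*a)(u_0)$. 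With those two repairs (and part (i) supplied separately, essentially verbatim from \cite[Theorem~3.1(a)]{BNRSW16}), your approach is a legitimate alternative to the paper's.
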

\begin{enumerate}
	\item \label{it1:uniqueness}Suppose that $u\in \go$ satisfies  $\f^u_u = \g_u^u$. If $\vp$ is a state on $\iota_{\f, r}(C^*_r(\f))$ that factors through $C^*_r(\g^u_u)$, then $\vp$ has a unique extension $\widetilde\vp$ to $C^*_r(\g)$. If $\vp$ is a state on $\iota_{\f}(C^*(\f))$ that factors through $C^*(\g^u_u)$, then $\vp$ has a unique extension $\widetilde\vp$ to $C^*(\g)$.   
	\item \label{it2:uniqueness} Suppose that $X_\f:= \{u\in \go : \f^u_u = \g^u_u \}$ is dense in $\go$. Then for any $*$-homomorphism $\phi: C_r^*(\g)\to B$, $\phi$ is injective if and only if its restriction to $\iota_{\f, r}(C^*_r(\f))$ is injective. 
\end{enumerate}

Before proving Theorem~\ref{th:uniqueness}, we wish to give an idea of why we want such a generalization. In the next section we will consider a groupoid of the form $\gt(S)$ for an inverse semigroup $S$, and construct an inverse subsemigroup $S^\iso\subseteq S$ whose tight groupoid $\gt(S^\iso)$ sits inside $\iso(\gt(S))^\circ$ as an open subgroupoid containing the units. Under some conditions they are equal, but not in general. Nevertheless, Theorem~\ref{th:uniqueness} will allow us to show that a $*$-homomorphism on $C^*_r(\gt(S))$ is injective if and only if it is injective on the subalgebra generated by the generators corresponding to $S^\iso$. 

Another point which must be acknowledged before getting to the proof is that the arguments from \cite{BNRSW16} go through in our case, essentially unchanged. So in what follows we briefly run through these arguments, pointing out along the way what modifications need to be made. 

We first prove a lemma which is almost identical to \cite[Lemma~3.5]{BNRSW16}, but for our proof of Theorem~\ref{th:uniqueness} we will need a slightly stronger statement which is implicit in the proof of \cite[Lemma~3.5]{BNRSW16}.

\begin{lem}\label{lem:BNRSW3.5}
	Let $\g$ be a locally compact Hausdorff \'etale groupoid, let $\f\subseteq \iso(\g)^\circ$ be an open subgroupoid containing the unit space, and suppose that $u\in X_\f$. 
	\begin{enumerate}
		\item \label{it1:BNRSW3.5} Let $a\in C^*_r(\g)$. Then for all $\ep>0$ there exists $c\in C^*_r(\f)$ and $b\in C_c(\go)^+$ such that $\|b\|_r = 1$, $\vp(b) = 1$ for all states that factor through $C^*_r(\g_u^u)$ and such that $\|bab - \iota_{\f, r}(c)\|_r<\ep$.
		\item \label{it2:BNRSW3.5} Let $a\in C^*(\g)$. Then for all $\ep>0$ there exists $c\in C^*(\f)$ and $b\in C_c(\go)^+$ such that $\|b\| = 1$, $\vp(b) = 1$ for all states that factor through $C^*(\g_u^u)$ and such that $\|bab - \iota_{\f}(c)\|<\ep$.
	\end{enumerate}  
\end{lem}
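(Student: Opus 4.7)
The plan is to follow the proof of \cite[Lemma~3.5]{BNRSW16}, replacing $\iso(\g)^\circ$ throughout by $\f$ and tracking that the positive function $b$ can be chosen with $b(u)=1$, which is what will give the extra conclusion about states factoring through $C^*_r(\g^u_u)$.

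First I would reduce to $a=f\in C_c(\g)$ by picking $f$ with $\|a-f\|_r<\ep$, and write $f=\sum_{i=1}^n f_i$ with $f_i\in C_c(B_i)$ for open precompact bisections $B_1,\ldots,B_n$ covering $\mathrm{supp}(f)$. The key refinement of the cover is to arrange that every bisection $B_i$ meeting $\g^u_u$ (necessarily in a single point $\gamma_i$, since $B_i$ is a bisection) lies inside $\f$; this uses the hypothesis $\g^u_u=\f^u_u\subseteq\f$ together with openness of $\f$. Call such indices type (a), and those with $B_i\cap\g^u_u=\emptyset$ type (b).

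For each type (b) index $i$ and each $\gamma\in\mathrm{supp}(f_i)$, since $\gamma\notin\g^u_u$ we have $r(\gamma)\neq u$ or $d(\gamma)\neq u$, so Hausdorffness of $\go$ together with continuity of $r,d$ supplies open neighborhoods $V_\gamma$ of $u$ and $U_\gamma$ of $\gamma$ with $r(U_\gamma)\cap V_\gamma=\emptyset$ or $d(U_\gamma)\cap V_\gamma=\emptyset$. A finite subcover of the compact set $\bigcup_{i\text{ type (b)}}\mathrm{supp}(f_i)$ together with intersection of the associated $V_\gamma$'s gives a single open neighborhood $V$ of $u$ for which $b(r(\gamma))f_i(\gamma)b(d(\gamma))=0$ for every type (b) index $i$ and every $\gamma$, whenever $\mathrm{supp}(b)\subseteq V$. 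I would then pick $b\in C_c(V)^+$ by Urysohn with $b(u)=1=\|b\|_\infty$; since $C_0(\go)$ embeds isometrically in both $C^*_r(\g)$ and $C^*(\g)$, $\|b\|_r=\|b\|=1$. By construction $bfb=\sum_{i\text{ type (a)}}bf_ib$ is supported in $\f$, so setting $c\in C_c(\f)$ to be the unique function with $\iota_{\f,r}(c)=bfb$ gives
\[
\|bab-\iota_{\f,r}(c)\|_r \;\le\; \|b\|_r^2\,\|a-f\|_r \;<\; \ep.
\]

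Finally, for a state $\vp=\psi\circ q_u$ on $\iota_{\f,r}(C^*_r(\f))$ factoring through $C^*_r(\g^u_u)$, the restriction $q_u(b)$ is the function on the discrete group $\g^u_u$ equal to $b(u)=1$ at the identity $u$ and zero elsewhere, i.e.\ the unit of $C^*_r(\g^u_u)$; thus $\vp(b)=\psi(1)=1$, yielding (i). Part (ii) follows by the same argument with the full norm, $\iota_\f$, and $C^*(\g^u_u)$ in place of their reduced counterparts. The main obstacle is the cover-refinement step: to output an element of $C_c(\f)$ rather than of $C_c(\iso(\g)^\circ)$ one must use openness of $\f$ and $\f^u_u=\g^u_u$ to place every bisection through the isotropy at $u$ inside $\f$ from the outset. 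The shrinking argument for type (b) bisections and the observation $\vp(b)=b(u)$ are routine adaptations of the original BNRSW proof.
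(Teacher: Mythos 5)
Your proof is correct and follows essentially the same route as the paper: the paper simply cites the proof of Lemma~3.3(b) of Brown--Nagy--Reznikoff--Sims--Williams with $\iso(\g)^\circ$ replaced by $\f$ to produce $b\in C_c(\go)^+$ with $b(u)=\|b\|=1$ and $bfb\in C_c(\f)$, whereas you unpack that cover-refinement argument explicitly. The concluding steps --- $q_u(b)=1_{\{u\}}$ giving $\vp(b)=1$, and the estimate $\|bab-\iota_{\f,r}(bfb)\|_r\le\|a-f\|_r<\ep$ --- match the paper's proof exactly.
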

\begin{proof}
	As in \cite[Lemma~3.5]{BNRSW16}, the proofs of \ref{it1:BNRSW3.5} and \ref{it2:BNRSW3.5} are exactly the same. 
	
	We first note that the proof of \cite[Lemma~3.3(b)]{BNRSW16} goes through exactly as written there if one replaces $\iso(\g)^\circ$ with $\f$. So given $a\in C^*_r(\g)$ we can find $f\in C_c(\g)$ with $\|f-a\|_r<\ep$ and then use that modification of \cite[Lemma~3.3(b)]{BNRSW16} to find $b\in C_c(\go)^+$ such that $b(u) = \|b\| = 1$ and $bfb\in C_c(\f)$. The quotient map $q_u: C^*_r(\f)\to C^*_r(\g_u^u)$ acts by restricting elements of $C_c(\f)$ to $\g_u^u$, and hence $q_u(b) = 1_{\{u\}}$, the identity of $C^*_r(\g_u^u)$. If $\vp = \psi\circ q_u$ for some state $\psi$ on $C^*_r(\g_u^u)$, then $\vp(b) = \psi(q_u(b)) = \psi(1_{\{u\}}) = 1$. Finally, taking $c = bfb$ gives $\|bab-\iota_{\f, r}(c)\|_r = \|bab- bfb\|_r\leq \|a-f\|_r <\ep$. 
\end{proof}
Note that the main difference in the statement of Lemma~\ref{lem:BNRSW3.5} and \cite[Lemma~3.5]{BNRSW16} is that $b$ can be taken to be in $C_c(\go)^+$ (but again this is implicit in their proof). 
  
\begin{proof}[Proof of Theorem~\ref{th:uniqueness}] The proof of \ref{it1:uniqueness} goes through verbatim from that of \cite[Theorem~3.1(a)]{BNRSW16} using Lemma~\ref{lem:BNRSW3.5}. 
	
	To prove \ref{it2:uniqueness}, for each $u\in X_\f$ let 
	\[
	S_u = \{\psi\circ q_u: \psi \text{ is a pure state on }C^*_r(\g_u^u)\},
	\]
	and let $S := \bigcup_{u\in X_\f}S_u$. Then \ref{it1:uniqueness} implies each element of $S_u$ extends uniquely to a state on $C_r^*(\g)$. For a state $\psi\in S$ with extension $\widetilde\psi$ to $C_r^*(\g)$, let $\pi_{\widetilde\psi}$ denote the GNS representation associated to $\widetilde\psi$. Then by \cite[Theorem~3.2]{BNRSW16} it will be enough to show that $\pi_S := \bigoplus_{\psi\in S}\pi_{\widetilde\psi}$ is faithful. 
	
	Since $\g$ is Hausdorff, there is a faithful conditional expectation $\Phi_\g: C^*_r(\g)\to C_0(\go)$ given on functions by restricting to the unit space. We will be done if we can show that $\pi_S(a) = 0$ implies that $\Phi_\g(a^*a) = 0$. Suppose otherwise, that $\Phi_\g(a^*a) \neq 0$. Since $\Phi_\g(a^*a)$ is a positive function on $\go$ and $X_\f$ is dense in $\go$, we can find $u\in X_\f$ such that $\Phi_\g(a^*a)(u) >0$. Then find positive $\ep$ with $\Phi_\g(a^*a)(u) >\ep>0$ and use Lemma~\ref{lem:BNRSW3.5} to find $b\in C_c(\go)^+$ and $c\in C^*_r(\f)$ such that $\vp\circ q_u(b) = 1$ for all states $\vp$ on $C^*_r(\g_u^u)$ with
	\[
	\|ba^*ab-\iota_{\f, r}(c)\|_r<\ep/2.
	\]
	The argument after \cite[Equation (3.4)]{BNRSW16} goes through verbatim to show that $|\vp(c)|<\ep/2$ for all $\vp\in S_u$. Hence
	\[
	\|q_u(c)\| = \sup\{|\psi(q_u(c))|: \psi\text{ is a pure state on }C_r^*(\g_u^u)\} = \sup\{|\vp(c)|: \vp\in S_u\} \leq \ep/2.
	\]
	
	Because $\f$ is Hausdorff, there is a canonical conditional expectation $\Phi_\f: C^*_r(\f)\to C_0(\go)$ given on functions by restricting to the unit space. For $x\in \go$, let $\ev_x$ denote the character on $C_0(\go)$ given by evaluation at $x$. Then $\ev_x\circ \Phi_\f$ is a state on $C^*_r(\f)$, and the exact same argument as in the proof of \cite[Theorem~3.1]{BNRSW16} shows that for all $x\in X_\f$, $\ev_x\circ \Phi_\f$ factors through $C^*_r(\g_x^x)$. It is straighforward to see that $\Phi_\g\circ\iota_{\f, r} = \Phi_\f$. Hence we have
	\[
	|\ev_u\circ \Phi_\g(\iota_{\f, r}(c))| = |\ev_u\circ\Phi_\f(c)| = |\psi(q_u(c))|\leq \|q_u(c)\| \leq \ep/2.
	\]
	Also, since $\ev_u$ is a character and $\Phi_\g$ is a $C_0(\go)$--bimodule map, we have $\ev_u\circ\Phi_\g(ba^*ab) = \ev_u(b)\ev_u\circ\Phi_\g(a^*a)\ev_u(b) = \ev_u\circ\Phi_\g(a^*a)$. Putting this together gives
	\begin{align*}
		|\ev_u\circ\Phi_\g(a^*a)|&\leq |\ev_u\circ\Phi_\g(a^*a) - \ev_u\circ \Phi_\g(\iota_{\f, r}(c)) + \ev_u\circ \Phi_\g(\iota_{\f, r}(c))|\\
		&\leq |\ev_u\circ\Phi_\g(ba^*ab) - \ev_u\circ \Phi_\g(\iota_{\f, r}(c))| + |\ev_u\circ \Phi_\g(\iota_{\f, r}(c))|\\
		&\leq \|ba^*ab-\iota_{\f, r}(c))\|_r + |\ev_u\circ \Phi_\g(\iota_{\f, r}(c))|\\
		&< \ep.
	\end{align*}
This contradicts our choice of $\ep$, and hence $\Phi_\g(a^*a) = 0$, implying $a= 0$. Hence $\pi_S$ is injective and \cite[Theorem 3.2]{BNRSW16} implies the result.
\end{proof}

\section{Application to inverse semigroups}\label{sec:ISG}

In this section we recall the definition of the tight groupoid of an inverse semigroup, and then apply Theorem~\ref{th:uniqueness} to it. It should be noted that the results of \cite{BNRSW16} have already been comprehensively applied in \cite{LM17} and \cite{LMS19} to give uniqueness theorems for inverse semigroup C*-algebras. As we discuss below (see Lemma~\ref{lem:0dis} and the discussion before and after it) we need a generalization of the results they obtain for one of our applications (the LCM monoids considered in Section~\ref{sec:LCM}). 

We start by giving the relevant definitions, from that of a semigroup all the way up to the tight groupoid of an inverse semigroup. For references for what is stated below, one can see \cite{Pa02}, \cite{Ex08}, and \cite{La98}.  

A {\em semigroup} is a set $S$ equipped with an associative binary operation, and is called a {\em monoid} if it has an identity element. A semigroup is called an {\em inverse semigroup} if for every $s\in S$ there exists a unique $s^*\in S$ such that $ss^*s = s$ and $s^*ss^* = s^*$. The idempotents form a commutative inverse subsemigroup $E(S) = \{e\in S: e^2 = e\}$, with $e^* = e$ for all $e\in E(S)$. A {\em zero element} is an element $0$ such that $0s = s0 = 0$ for all $s\in S$; such an element must be unique if it exists. 

There is a natural partial order on any inverse semigroup $S$ given by $s\leqslant t$ if and only if there exists $e\in S$ such that $se = t$. On $E(S)$ this becomes $e\leqslant f$ if and only if $ef = e$; with this ordering $E(S)$ is a meet-semilattice with $e\wedge f = ef$. If we have $C\subseteq D\subseteq E(S)$, $C$ is called a {\em cover} of $D$ if for any nonzero $e\in D$ we can find $e\in C$ with $ec \neq 0$. If $e\in E(S)$ and $C$ is a cover of $\{f\in E(S): f\leqslant e \}$, we say that $C$ is a cover of $e$. 

If $X$ is a set, the {\em symmetric inverse monoid} on $X$ is $\I(X) = \{f:U\to V: U, V\subseteq X, f\text{ bijective }\}$. It is an inverse semigroup under composition of functions on the largest possible domain, and with $f^* = f^{-1}$. The empty function is the zero element of $\I(X)$, and here $f\leqslant g$ if and only if $g$ extends $f$ as a function. 

A {\em filter} in $E(S)$ is a proper subset $\xi\subset E(S)$ which is {\em upwards closed} in the sense that $e\in \xi$ and $e\leqslant f$ implies $f\in \xi$, and {\em downwards directed} in the sense that $e, f\in \xi$ implies $ef\in \xi$. The set of all filters is denoted $\Ef(S)$; we give it a topology by viewing it as a subset of the power set of $E(S)$ which can be identified with the compact product space $\{0,1\}^{E(S)} $. With this topology, $\Ef(S)$ is called the {\em spectrum} of $E(S)$. 

A filter is called an {\em ultrafilter} if it is not properly contained in any other filter. The subspace of ultrafilters is denoted $\Eu(S)\subseteq \Ef(S)$, its closure is called the space of {\em tight filters} and is denoted $\Et(S) = \overline{\Eu(S)}$. For an element $x\in E(S)$ and finite set $Y\fs E(S)$, let
\[
U(x,Y) := \{\xi\in \Et(S): x\in \xi, Y\cap \xi = \emptyset\}.
\]
Sets of this type form a compact open basis for the topology on $\Et(S)$. The special case where $Y$ is empty will be mentioned frequently in what follows; these we denote
\[
D_e := \{\xi\in \Et(S): e\in \xi\}.
\]
When $\xi$ is an ultrafilter, the sets $\{D_e\}_{e\in \xi}$ form a neighbourhood base for $\xi$. Then the map $\theta: S\to \I(\Et(S))$ defined by 
\[
\theta_s: D_{s^*s}\to D_{ss^*}, \hspace{1cm}\theta_s(\xi) = \{ses^*: e\in \xi\}^{\leqslant}
\]
(where $A^{\leqslant}:= \{e\in E(S): a\leqslant e\text{ for some }a\in A$) is an inverse semigroup homomorphism. Furthermore, each $\theta_s$ is a homeomorphism between compact open subsets of $\Et(S)$. 

We can now define the tight groupoid of $S$. Put an equivalence relation on $S\times \Et(S)$ by declaring $(s, \xi) \sim (t, \xi)$ if and only if there exists $e\in \xi$ such that $se = te$. The set of all equivalence classes is
\[
\gt(S) := \{[s,\xi] : \xi\in D_{s^*s}\}
\]
and is a groupoid with range, source, inverse, and product given by
\[
d[s,\xi] = \xi,\hspace{0.5cm} r[s,\xi] = \theta_s(\xi),\hspace{0.5cm} [s, \xi]^{-1} = [s^*, \theta_s(\xi)],\hspace{0.5cm} [s, \theta_t(\xi)][t,\xi] = [st,\xi]
\]
where we are identifying $\gt(S)^{(0)} = \{[e,\xi] : \xi\in D_e\}$ with $\Et(S)$. 

For a compact open set $U\subseteq D_{s^*s}$ define
\[
\Theta(s, U):= \{[s,\xi]: \xi\in U\}.
\]
These sets generate a topology on $\gt(S)$ for which it is \'etale, and sets of this type form a basis of compact open bisections in $\gt(S)$. 

A summary of what was presented above is that $\theta:S\to \I(\Et(S))$ is an {\em action} of $S$ on $\Et(S)$, and $\gt(S)$ is the {\em groupoid of germs} or {\em transformation groupoid} of this action, see \cite{Pa02, Ex08}.

This groupoid will not always be Hausdorff. By \cite[Theorem~3.16]{EP16}, $\gt(S)$ is Hausdorff if and only if 
\begin{equation}\label{eq:H}\tag{H}
\text{for every $s\in S$, the set $\J_s := \{e\in E(S): e \leqslant s\}$ has a finite cover.} 
\end{equation}
Let $s\in S$ and let $e\leqslant s^*s$. Then following \cite{EP16} we say $e$ is {\em fixed} by $s$ if $se = e$, and we say that $e\leqslant s^*s$ is {\em weakly fixed} by $s$ if $sfs^*f\neq 0$ for all $0<f\leqslant e$. For $s\in S$ we write
\[
\W_s := \{e\in E(S): e\text{ weakly fixed by }s\}.
\]
Note that every nonzero idempotent fixed by $s$ is also weakly fixed by $s$, so $\J_s\setminus \{0\} \subseteq \W_s$. We also note that if $e$ is weakly fixed by $s$, setting $t = se$ gives that $t^*t= es^*se = e$ is weakly fixed by $t$.
\begin{lem}\label{lem:SisoISSG}
Let $S$ be an inverse semigroup. Then the set
\begin{equation}\label{eq:Sisodef}	
S^\iso := \{s\in S: s^*s \text{ is weakly fixed by }s\}
\end{equation}
is a inverse subsemigroup of $S$ which contains $E(S)$.
\end{lem}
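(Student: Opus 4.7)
The plan is to verify the three requirements separately: $E(S) \subseteq S^\iso$, closure under the involution, and closure under the product. The first is immediate: for $e \in E(S)$ and $0 < f \leqslant e = e^*e$, the order on idempotents gives $ef = f$, so $efe^*f = f \neq 0$, proving $e$ weakly fixes $e^*e$. For closure under involution, given $s \in S^\iso$ and $0 < f \leqslant ss^* = (s^*)^*s^*$, I set $g := s^*fs$; routine checks show $g$ is an idempotent with $g \leqslant s^*s$, and $g \neq 0$ because $sgs^* = ss^*fss^* = f$. The hypothesis on $s$ then gives $sgs^*g \neq 0$, which simplifies to $fs^*fs \neq 0$; taking adjoints and using $f^* = f$ yields $s^*fsf \neq 0$, exactly the weak-fixity condition required for $s^*$.

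Closure under product is the main obstacle. Given $s, t \in S^\iso$ and $0 < f \leqslant (st)^*(st) = t^*s^*st$, the hypothesis on $s$ cannot be applied to $f$ directly because $f$ need not lie below $s^*s$. The plan is to chain the two hypotheses through auxiliary idempotents. Since $f \leqslant t^*t$, the push-forward $g := tft^*$ is a nonzero idempotent, and a short computation using $f = t^*s^*st \cdot f$ and commutativity of idempotents rewrites $g = s^*s \cdot g$, so $g \leqslant s^*s$. Applying $s \in S^\iso$ to $g$ makes $k := sgs^* g$ nonzero. Noting that $sgs^* = stft^*s^* = (st)f(st)^*$, setting $m := sgs^*$ reduces the desired conclusion $(st)f(st)^*f \neq 0$ to showing $mf \neq 0$, knowing that $mg = k \neq 0$.

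To upgrade $mg \neq 0$ to $mf \neq 0$, I plan to invoke the hypothesis on $t$ via a pull-back. Since $k \leqslant g \leqslant tt^*$, the idempotent $t^*kt$ is well defined, satisfies $t^*kt \leqslant t^*gt = f$ (using $f \leqslant t^*t$, so $t^*\cdot tft^* \cdot t = f$), and is nonzero because $t(t^*kt)t^* = k$. Applying $t \in S^\iso$ to $t^*kt \leqslant t^*t$ and simplifying $t(t^*kt)t^* = k$ yields $k \cdot t^*kt \neq 0$. From $k \leqslant m$ and $t^*kt \leqslant f$, monotonicity of the product in the commutative idempotent semilattice $E(S)$ gives $k \cdot t^*kt \leqslant mf$, so $mf \neq 0$, as required. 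The essential idea is thus to invoke $s \in S^\iso$ on the push-forward $tft^*$ and $t \in S^\iso$ on a pull-back lying below $f$, then glue the two conclusions through the order structure on $E(S)$.
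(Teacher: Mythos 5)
Your argument is correct and follows essentially the same strategy as the paper's: verify the weak-fixity condition by conjugating idempotents through $s$ and $t$, chaining the two hypotheses, and finishing with the order structure on $E(S)$. The only real difference is in the product step, where the paper first applies the hypothesis on $t$ to $e$ itself and then the hypothesis on $s$ to $tet^*e$, whereas you apply $s$ to the push-forward $tft^*$ first and then $t$ to a pull-back lying below $f$; both routes go through.
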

\begin{proof}
	First let $s\in S^\iso$ and let $e\leqslant ss^*$ be a nonzero idempotent. Then $s^*es\leqslant s^*s$ is nonzero so by assumption we have $s(s^*es)s^*(s^*es)$ is nonzero. But $e\leqslant ss^*$ implies this is equal to $s^*ese$, so $s^*\in S^\iso$. 
	
	Now we show $S^\iso$ is closed under products. Take $s,t\in S^\iso$ and suppose $e\leqslant (st)^*st$ is a nonzero idempotent. Since $e\leqslant t^*s^*st\leq t^*t$, our assumption implies $tet^*e$ is nonzero. The inequality $e\leqslant t^*s^*st$ also implies $tet^*\leqslant tt^*s^*stt^* = s^*stt^* \leqslant s^*s$, and since $0\neq tet^*e \leqslant tet^*$  our assumption implies $s(tet^*e)s^*(tet^*e)$ is nonzero. This implies $s(tet^*e)s^*e$ is nonzero, and $tet^*e\leqslant tet^*$ then gives $0\neq s(tet^*e)s^*e \leqslant stet^*s^*e$. Thus $stet^*s^*e$ is nonzero, and since $e\leqslant t^*s^*st$ was arbitrary we have $st\in S^\iso$.
\end{proof}
We note that, unknown to us when first posting this preprint, this inverse subsemigroup was previously considered by Li in \cite[Definition~5.8]{Li22}, and there was denoted $S^c$. We keep the notation $S^\iso$ to emphasize its relation to the isotropy group bundle (see below, Proposition~\ref{prop:SisoGiso}) and to avoid confusion with the core of a right LCM semigroup and the inverse semigroup it generates in Section~\ref{sec:LCM}. 

Lemma~\ref{lem:SisoISSG} implies that the set $\{[s,\xi]\in\gt(S): s\in S^\iso, \xi\in D_e\}$ is a subgroupoid of $\gt(S)$ containing the unit space which is easily seen to be isomorphic to $\gt(S^\iso)$. In what follows we will identify this subgroupoid with $\gt(S^\iso)$ without further remark.
\begin{prop}\label{prop:SisoGiso}
	Let $S$ be an inverse semigroup satisfying \eqref{eq:H} and let $S^\iso$ be as in \eqref{eq:Sisodef}. Then \begin{enumerate}
		\item \label{it1:SisoGiso} $\gt(S^\iso)$ is an open subgroupoid of $\iso(\gt(S))^\circ$, 
		\item \label{it2:SisoGiso} the set $\{u\in \Et(S) : \gt(S^\iso)_u^u = \gt(S)_u^u\}$ is dense in $\Et(S)$, and
		\item \label{it3:SisoGiso} $\gt(S^\iso)\subseteq \iso(\gt(S))^\circ\subseteq \overline{\gt(S^\iso)}.$
	\end{enumerate}
\end{prop}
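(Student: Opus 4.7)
The plan is to prove the three parts in the order (i), (iii), (ii), since the ultrafilter-level equality of interior isotropy with $\gt(S^\iso)$ that surfaces inside (iii) is precisely what drives (ii).

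For (i), $\gt(S^\iso)$ is by construction the union of the basic open bisections $\Theta(s, D_{s^*s})$ for $s\in S^\iso$, hence open in $\gt(S)$, and it is closed under products and inverses (a subgroupoid containing $\go$) by Lemma~\ref{lem:SisoISSG}. The main claim is that for each $s\in S^\iso$, $\theta_s$ fixes every tight filter in $D_{s^*s}$. I first check this on ultrafilters: if $u\in \Eu(S)\cap D_{s^*s}$ and $\theta_s(u)\neq u$, then (noting $\theta_s(u)$ is also an ultrafilter since $\theta_{s^*}$ is its inverse) one finds $g\in u\setminus\theta_s(u)$ and $x\in u$ with $x\leqslant s^*s$ and $g\cdot sxs^*=0$; setting $k=gx\in u$ yields $0<k\leqslant s^*s$ with $sks^*\cdot k\leqslant g\cdot sxs^*=0$, contradicting weak fixing of $s^*s$ by $s$. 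The extension to an arbitrary tight $\xi\in D_{s^*s}$ is then immediate, since $\{\xi:\theta_s(\xi)=\xi\}$ is closed in $D_{s^*s}$ (as an equalizer of continuous maps into the Hausdorff space $\Et(S)$) and contains the dense set of ultrafilters. Thus each $\Theta(s, D_{s^*s})$ is an open subset of $\iso(\gt(S))$ and therefore lies in $\iso(\gt(S))^\circ$.

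For (iii), the first inclusion is (i). For the second, fix $[s,u]\in\iso(\gt(S))^\circ$ and refine a given neighborhood to a basic open bisection $\Theta(t,V)\subseteq\iso(\gt(S))^\circ$ with $[t,u]=[s,u]$, so that $\theta_t$ is the identity on $V$. Pick any ultrafilter $\eta\in V$; using the ultrafilter neighborhood base $\{D_e:e\in\eta\}$, refine to $e\in\eta$ with $e\leqslant t^*t$ and $D_e\subseteq V$. For any nonzero $f\leqslant e$, any ultrafilter $\xi$ containing $f$ lies in $D_e\subseteq V$, so $\theta_t(\xi)=\xi$ gives $tft^*f\in\xi\setminus\{0\}$. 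A direct computation yields $(te)f(te)^*f=tft^*f$ for $f\leqslant e$, so $te\in S^\iso$ and $[te,\eta]\in\gt(S^\iso)\cap\Theta(t,V)$. As this works inside every neighborhood of $[s,u]$, we obtain $[s,u]\in\overline{\gt(S^\iso)}$.

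For (ii), observe that when $u$ is itself an ultrafilter the refinement in (iii) applies with $\eta=u$, yielding: any ultrafilter $u$ with $\gt(S)^u_u\subseteq\iso(\gt(S))^\circ$ satisfies $\gt(S)^u_u\subseteq\gt(S^\iso)$, hence lies in $X_\iso:=\{u:\gt(S^\iso)^u_u=\gt(S)^u_u\}$. For each $s\in S$, the set $B_s:=\{u\in D_{s^*s}:\theta_s(u)=u,\ [s,u]\notin\iso(\gt(S))^\circ\}$ equals the topological boundary of the closed fixed-point set $\{\theta_s=\id\}$ in $D_{s^*s}$, hence is nowhere dense. Density of $X_\iso$ then follows by Baire category in the locally compact Hausdorff space $\Et(S)$: provided $\gt(S)$ is second countable (so that the closed set $\iso(\gt(S))\setminus\iso(\gt(S))^\circ$ is covered by countably many open bisections), $\bigcup_sB_s$ is a countable union of nowhere-dense sets and $\Eu(S)\setminus\bigcup_sB_s\subseteq X_\iso$ is dense. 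I expect the density in (ii) to be the main obstacle: the possibly uncountable family $\{B_s\}_{s\in S}$ must be collapsed into a meager set, which is routine under second countability but otherwise requires a careful countable cover of the boundary isotropy $\iso(\gt(S))\setminus\iso(\gt(S))^\circ$ by bisections.
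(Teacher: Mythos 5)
Your arguments for \ref{it1:SisoGiso} and \ref{it3:SisoGiso} are correct and essentially the ones in the paper; the only difference is that you prove inline the equivalence ``$\Theta(s,D_e)\subseteq\iso(\gt(S))$ iff $e$ is weakly fixed by $s$'' (via the ultrafilter/equalizer argument), whereas the paper simply cites \cite[Lemma~4.9]{EP16}. That part is fine and self-contained.

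Part \ref{it2:SisoGiso} is where there is a genuine gap, and it is not only the countability issue you flag. Your reduction is right: $B_s=\partial\{\theta_s=\id\}$ is closed and nowhere dense, and any ultrafilter avoiding every $B_s$ lies in $X_\f$. But the concluding Baire-category step fails twice. First, as you note, $\bigcup_{s\in S}B_s$ is a union indexed by all of $S$, and the proposition carries no countability or second-countability hypothesis, so you cannot conclude this union is meager; the ``careful countable cover'' you defer to is exactly the missing content. Second, even granting countability, Baire only gives that $\bigl(\bigcup_s B_s\bigr)^c$ is comeager, hence dense --- it need not contain a dense \emph{open} set. Since $\Eu(S)$ is merely dense in $\Et(S)$ (it can in principle be meager, like $\mathbb{Q}$ in $\mathbb{R}$), the intersection $\Eu(S)\setminus\bigcup_s B_s$ of a dense set with a comeager set can fail to be dense, so your final sentence does not follow. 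The paper sidesteps both problems by invoking the proof of \cite[Lemma~3.3(a)]{BNRSW16}, which shows that the \emph{entire} complement $X^c=\{u:\gt(S)_u^u\neq\iso(\gt(S))^\circ{}_u^u\}$ is nowhere dense as a single set; then $(\overline{X^c})^c$ is open and dense, and an open dense set meets the dense set of ultrafilters inside every open $U$. To repair your proof you should either import that lemma as a black box, or prove directly that $\bigcup_{s\in S}B_s$ (equivalently $d(\iso(\gt(S))\setminus\iso(\gt(S))^\circ)$) is nowhere dense, rather than merely meager.
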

\begin{proof}
	Let $s\in S^\iso$. Then the open set $\Theta(s, D_{s^*s})$ is contained in $\iso(\gt(S))$ by \cite[Lemma~4.9]{EP16}; this shows that $\gt(S^\iso)$ is an open subgroupoid of $\iso(\gt(S))^\circ$.
	
	By the proof of \cite[Lemma~3.3(a)]{BNRSW16}, the set $X:= \{u\in \Et(S) : \gt(S)_u^u = \iso(\gt(S))^\circ_u\}$ has nowhere dense complement. Thus $\overline{X^c}$ is also nowhere dense, so $(\overline{X^c})^c$ is an open dense subset of $\Et(S)$ which is contained in $X$. Then for any open set $U\subseteq \Et(S)$ one can find an ultrafilter $\xi\in U \cap X$. It remains to show that $\iso(\gt(S))_\xi^\circ \subseteq \gt(S^\iso)^\xi_\xi$. Suppose that $[s,\xi] \in \iso(\gt(S))_\xi^\circ$. Then because $\xi$ is an ultrafilter we can find $e\in \xi$ such that $\Theta(s, D_e)\subseteq \iso(\gt(S))^\circ$, whence \cite[Lemma~4.9]{EP16} implies $e$ is weakly fixed by $s$. Hence $[s,\xi] = [se, \xi]\in \gt(S^\iso)$, with both domain and range equal to $\xi$. 
	
	We have left to show that $\iso(\gt(S))^\circ\subseteq \overline{\gt(S^\iso)}
	$. Take $[s,\xi]\in \iso(\gt(S))^\circ$. For every open neighbourhood $U$ of $\xi$ with the property that $\Theta(s, U)\subseteq \iso(\gt(S))$, one can find an ultrafilter $\eta_U\in U$ and hence an idempotent $e_U\in \eta_U$ with $D_{e_U}\subseteq U$. Then again by \cite[Lemma~4.9]{EP16} $e_U$ is weakly fixed by $s$ and so  $[s, \eta_U] = [se_U, \eta_U]$ is an element of $\gt(S^\iso)$. Then the net $([s, \eta_U])_{U}$ indexed by the neighbourhoods of $[s,\xi]$ clearly converges to $[s,\xi]$, implying $[s,\xi]\in\overline{\gt(S^\iso)}$.
\end{proof}
Assuming that \eqref{eq:H} holds, our above descriptions of $C^*_r(\gt(S))$ and $C^*(\gt(S))$ apply. Because the sets $\Theta(s, D_e)$ are compact and open, their characteristic functions are in $C_c(\gt(S))$. We let
\begin{equation}\label{eq:Tdef}
T_s : = 1_{\Theta(s, D_{s^*s})} \in C_c(\gt(S)).
\end{equation} 
The C*-algebra $C^*(\gt(S))$ enjoys a universal property which we describe now. A map $\rho:S\to A$ from an inverse semigroup to a C*-algebra is called a {\em representation} if $\rho(st)= \rho(s)\rho(t)$ and $\rho(s^*) = \rho(s)^*$. Note that if $\rho$ is a representation, then $\rho(E(S))$ is a set of commuting projections in $A$, so given $e, f\in E(S)$ we can form the least upper bound $\rho(e)\vee \rho(f) = \rho(e) + \rho(v) - \rho(e)\rho(v)$. A representation is called {\em tight} if whenever $C$ is a cover of $e$, we have that $\rho(e) = \bigvee_{c\in C}\rho(c)$. This is not the original definition given by Exel \cite[Defintion~13.1]{Ex08}, but was shown to be equivalent in \cite{DM14, Ex19}. Then Exel defined the {\em tight C*-algebra of }$S$, denoted $\Ct(S)$, to be the universal C*-algebra generated by one generator for each element of $S$ subject to the relations that say the map which sends $s\in S$ to its corresponding generator is a tight representation. By \cite[Theorem~2.4]{Ex10} our groupoid C*-algebra $C^*(\gt(S))$ is isomorphic to $\Ct(S)$, and the map $T: S\to C^*(\gt(S))$ given by \eqref{eq:Tdef} is a tight representation with the universal property, i.e., if $\rho: S\to A$ is any other tight representation there exists a $*$-homomorphism $\pi_\rho: C^*(\gt(S))\to A$ with $\rho = \pi\circ T$. 

With all that stated, we now have the following uniqueness theorem for tight C*-algebras of inverse semigroups.
\begin{theo}\label{thm:isguniqueness}
	Let $S$ be an inverse semigroup satisfying \eqref{eq:H}. Let $\{T_s\}_{s\in S}\subseteq C^*_r(\gt(S))$ denote the canonical set of generators. Then if $B$ is a C*-algebra and $\phi: C^*_r(\gt(S))\to B$ is a *-homomorphism, $\phi$ is injective if and only if it is injective on $C^*(T_s: s\in S^\iso)$. In particular, if $\gt(S)$ satisfies weak containment, then a *-homomorphism $\phi: \Ct(S)\to B$ is injective if and only if it is injective on $\Ct(S^\iso)$. 
\end{theo}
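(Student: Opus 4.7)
The plan is to apply Theorem~\ref{th:uniqueness}\ref{it2:uniqueness} with $\g = \gt(S)$ and $\f = \gt(S^\iso)$, the latter identified as an open subgroupoid of $\gt(S)$ as in the discussion preceding Proposition~\ref{prop:SisoGiso}. The required hypotheses are already in place: \eqref{eq:H} ensures that $\gt(S)$ is locally compact, Hausdorff, and \'etale; Proposition~\ref{prop:SisoGiso}\ref{it1:SisoGiso} shows $\gt(S^\iso)$ is an open subgroupoid of $\iso(\gt(S))^\circ$ containing the unit space; and Proposition~\ref{prop:SisoGiso}\ref{it2:SisoGiso} gives the density of $X_\f = \{u \in \Et(S) : \gt(S^\iso)_u^u = \gt(S)_u^u\}$ in $\Et(S)$. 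Invoking the theorem, $\phi$ is injective if and only if its restriction to $\iota_{\gt(S^\iso), r}(C^*_r(\gt(S^\iso)))$ is injective.

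The only substantive remaining step is to identify that subalgebra with $C^*(T_s : s \in S^\iso)$. For $s \in S^\iso$ the bisection $\Theta(s, D_{s^*s})$ is entirely contained in $\gt(S^\iso)$, so $T_s = 1_{\Theta(s, D_{s^*s})}$ is precisely the image under $\iota_{\gt(S^\iso), r}$ of the corresponding element of $C_c(\gt(S^\iso))$; this gives one inclusion. For the reverse, the sets $\Theta(s, U)$ with $s \in S^\iso$ and $U \subseteq D_{s^*s}$ compact open form a basis of compact open bisections for $\gt(S^\iso)$, so their indicator functions span a dense $*$-subalgebra of $C^*_r(\gt(S^\iso))$. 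Using $E(S) \subseteq S^\iso$ and that $S^\iso$ is closed under products (Lemma~\ref{lem:SisoISSG}), every such $U$ can be written as a finite Boolean combination of sets $D_e$, and a routine inclusion-exclusion then expresses $1_{\Theta(s, U)}$ as a polynomial in $\{T_t : t \in S^\iso\}$, giving the reverse inclusion.

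For the final sentence, weak containment of $\gt(S)$ means $C^*_r(\gt(S)) = C^*(\gt(S)) \cong \Ct(S)$ via \cite[Theorem~2.4]{Ex10}, and under this identification $C^*(T_s : s \in S^\iso)$ is the canonical copy of $\Ct(S^\iso)$ inside $\Ct(S)$, so the conclusion translates directly. The main obstacle is minor: essentially everything follows by assembling results from Section~\ref{sec:GpdUniqueness} and the groupoid-theoretic description of $S^\iso$ in Proposition~\ref{prop:SisoGiso}. The only genuinely new work is the routine density check identifying the two subalgebras, for which the key observation is that the definition of $S^\iso$ and its closure under the semigroup operations are exactly what is needed to stay inside the class $\{T_s : s \in S^\iso\}$ throughout the inclusion-exclusion argument.
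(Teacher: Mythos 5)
Your proposal is correct and takes essentially the same route as the paper, which simply cites Theorem~\ref{th:uniqueness} together with Proposition~\ref{prop:SisoGiso}. The extra detail you supply --- identifying $\iota_{\gt(S^\iso),r}(C^*_r(\gt(S^\iso)))$ with $C^*(T_s : s\in S^\iso)$ via the inclusion-exclusion argument on compact open bisections --- is a step the paper leaves implicit, and your treatment of it is sound.
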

\begin{proof}
	Follows directly from Theorem~\ref{th:uniqueness} and  Proposition~\ref{prop:SisoGiso}.
\end{proof}
As mentioned above, \cite[Theorem~3.1(b)]{BNRSW16} has previously been applied to the tight C*-algebra of an inverse semigroup in \cite[Theorem~5.7]{LM17} and \cite[Theorem~7.4]{LMS19}. They have different hypotheses on $S$ to state their uniqueness theorem, so we now clarify the relationship between their results and Theorem~\ref{thm:isguniqueness}. 

An inverse semigroup $S$ is called {\em 0-disjunctive} if whenever $0<e<f$ for idempotents $e$ and $f$, then there exists $0 < e'<f$ with $ee' = 0$. The {\em centralizer} of $S$ is the inverse subsemigroup $Z(S) = \{s\in S: se = es\text{ for all }e\in E(S)\}$. The results \cite[Theorem~5.7]{LM17} and \cite[Theorem~7.4]{LMS19} then state that when $S$ satsfies \eqref{eq:H} and is 0-disjunctive, a $*$-homomorphism $\phi: C^*_{r}(\gt(S))\to B$ is injective if and only if it is injective on $C^*(T_s: s\in Z(S))$. The next lemma shows the relationship between their hypotheses and ours.
\begin{lem}\label{lem:0dis}
	Let $S$ be an inverse semigroup, let $Z(S) = \{s\in S: se = es\text{ for all }e\in E(S)\}$, and let $S^\iso$ be as in \eqref{eq:Sisodef}. Then $Z(S)\subseteq S^\iso$. Furthermore, if $S$ is 0-disjunctive we have the following:
	\begin{enumerate}
		\item \label{it1:0disLemma}$s^*s = ss^*$ for all $s\in S^\iso$.
		\item \label{it2:0disLemma}$ses^* = e$ for all $s\in S^\iso$ with $0<e\leqslant s^*s$.
		\item \label{it3:0disLemma}$Z(S) = S^\iso$.
	\end{enumerate}
\end{lem}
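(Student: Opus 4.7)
The plan is to work through the four claims in order, using in each case a direct manipulation when possible and a contradiction via 0-disjunctivity when needed. The unifying technique for parts (i)--(iii) is: if something we want fails, use 0-disjunctive to extract a small nonzero idempotent with some orthogonality, then push that idempotent through the weakly fixed condition to reach $0\ne 0$.

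For the inclusion $Z(S)\subseteq S^\iso$, I would argue directly. Take $s\in Z(S)$ and $0<f\leqslant s^*s$. Taking adjoints of $se=es$ for all $e\in E(S)$ shows $s^*$ also commutes with every idempotent, so $sfs^* = fss^*$ and hence $sfs^*f = fss^*f$. The claim is that this is nonzero. If it were zero, then (since idempotents commute) $fss^* = 0$, so right-multiplying by $s$ gives $fs=0$; but $s\in Z(S)$ then forces $sf=fs=0$, and then $f=s^*sf = s^*(sf)=0$, contradicting $f\ne 0$. So $s^*s$ is weakly fixed by $s$.

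For part (i), suppose $s^*s\ne ss^*$. Up to replacing $s$ by $s^*$ (which lies in $S^\iso$ by Lemma~\ref{lem:SisoISSG}), assume $s^*s\not\leqslant ss^*$, so $s^*s\cdot ss^* < s^*s$. By 0-disjunctivity pick $0<e'<s^*s$ with $e'(s^*s\cdot ss^*)=0$; since $e'\leqslant s^*s$ this reduces to $e'\cdot ss^*=0$. But $se's^*\leqslant s(s^*s)s^* = ss^*$, so $se's^*\cdot e'\leqslant ss^*\cdot e' = 0$, contradicting that $s^*s$ is weakly fixed by $s$. For part (ii), take $0<e\leqslant s^*s$. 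To show $e\leqslant ses^*$, suppose otherwise: $e\cdot ses^* < e$, so 0-disjunctivity yields $0<f<e$ with $f\cdot ses^*=0$; since $f\leqslant e$ gives $sfs^*\leqslant ses^*$, we get $sfs^*\cdot f\leqslant ses^*\cdot f = 0$, contradicting weakly fixed. For the reverse $ses^*\leqslant e$, if instead $e < ses^*$, find $0<g<ses^*$ with $ge=0$; a direct computation using $e\leqslant s^*s$ and commutativity of idempotents gives $s^*(ses^*)s = (s^*s)e(s^*s) = e$, so $s^*gs\leqslant e$. Applying $s^*\in S^\iso$ (so $ss^*$ is weakly fixed by $s^*$) to $g\leqslant ses^*\leqslant ss^*$ yields $s^*gs\cdot g\ne 0$, yet $s^*gs\cdot g\leqslant e\cdot g=0$, a contradiction.

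For part (iii), the inclusion $Z(S)\subseteq S^\iso$ is already known, so I only need $S^\iso\subseteq Z(S)$. Given $s\in S^\iso$ and $0<f\leqslant s^*s$, part (ii) gives $sfs^* = f$; right-multiplying by $s$ and using $fs^*s=f$ yields $sf = fs$. For an arbitrary $e\in E(S)$, set $f:=es^*s$, so $f\leqslant s^*s$. If $f\ne 0$, using $s=ss^*s$ (and part (i), which gives $ss^*=s^*s$, together with commutativity of idempotents) we get $se = s\cdot s^*s\cdot e = sf = fs = es^*s\cdot s = es$. If $f=0$ then $se=0$, and by (i) also $ess^*=es^*s=0$, so $es=ess^*\cdot s=0$; thus $se=es$ in both cases, giving $s\in Z(S)$.

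The main obstacle is part (ii), specifically the second inequality $ses^*\leqslant e$: it requires both the weakly fixed condition for $s^*$ (so we must appeal to $s^*\in S^\iso$ from Lemma~\ref{lem:SisoISSG}) and the somewhat delicate identity $s^*(ses^*)s=e$, which depends on $e\leqslant s^*s$ and commutativity of idempotents but, importantly, not on (i); all the other contradictions follow the same template and are comparatively routine once the 0-disjunctive splitting is set up.
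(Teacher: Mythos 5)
Your proof is correct and follows essentially the same strategy as the paper's: a direct computation for $Z(S)\subseteq S^\iso$, and 0-disjunctivity-driven contradictions fed back into the weak-fixedness condition for (i) and (ii), with (iii) deduced from (i) and (ii). The only point worth adding is that before invoking 0-disjunctivity on $0 < s^*s\,ss^* < s^*s$ in (i) (and on $0 < e\,ses^* < e$ in (ii)) you should record that these idempotents are nonzero, which is immediate from the weak fixedness of $s^*s$ by $s$ and is exactly the observation the paper makes at the corresponding step.
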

\begin{proof}
	First take $s\in Z(S)$, and suppose $0\neq e\leqslant s^*s$. Then $ses^*e = ses^*$, as $s^*\in Z(S)$ as well. If $ses^* = 0$, then $s^*ses^*s = 0$ as well, but $e\leqslant s^*s$ implies $s^*ses^*s = e$. Since $e$ was assumed to be nonzero, this is a contradiction. Hence $ses^*e \neq 0$ and so $s\in S^\iso$. 
	
	Now assume $S$ is 0-disjunctive. If $s \in S^\iso$, then by definition we have $0 \neq ss^*ss^*(s^*s)  = ss^*s^*s$. If this is equal to $s^*s$ we would have $s^*s\leqslant ss^*$, and a symmetric argument applied to $s^*$ would give the other inequality. Otherwise we have $ss^*s^*s < s^*s$, so by assumption we can find $0<e < s^*s$ with $0 = ss^*s^*se = ss^*e$. Then $s\in S^\iso$ implies $ses^*e \neq 0$. But $ss^*e = 0$  implies $ses^*e = ses^*ss^*e = ses^*(0) = 0$, a contradiction. This implies $s^*s = ss^*$.
	
	To prove \ref{it2:0disLemma} we note that it is clearly true when $e = s^*s$, so we can assume $0<e<s^*s$. Then $ses^*e \neq 0$. Suppose that $ses^* \neq e$. Then we either have $ses^*e < e$ or $ses^*e <ses^*$. In the first case $0<ses^*e<e<s^*s$, so by assumption we can find $0<f<e$ with $0 = ses^*ef = ses^*f$. Since $s\in S^\iso$ we have that $sfs^*f \neq 0$. But $0<sfs^*f \leq ses^*f = 0$, a contradiction. In the second case where $ses^*e <ses^* < ss^*$, so by assumption we can find $0<f<ses^*$ with $0=fses^*e = fe$. Since $s\in S^\iso$ implies $s^*\in S^\iso$, we have that $s^*fsf\neq 0$. But then $0< s^*fsf \leqslant s^*(ses^*)sf = s^*sef = 0$, a contradiction. Contradiction in both cases implies $ses^* = e$.
	
	Now \ref{it3:0disLemma} follows: if $s\in S^\iso$ and $e\leqslant s^*s$, we have $se = ss^*se = ses^*s = es$ by \eqref{it2:0disLemma}. For general $e$, we have $s^*se \leqslant s^*s$ so $se = ss^*se = s^*ses = e(s^*s)s = e(ss^*)s$ by \eqref{it1:0disLemma}, whence $se = es$. 	
\end{proof}
Lemma~\ref{lem:0dis} shows that Theorem~\ref{thm:isguniqueness} generalizes \cite[Theorem~5.7]{LM17} to cases where $S$ might not be 0-disjunctive (though as noted after \cite[Theorem~7.4]{LMS19}, they have ways of getting around the 0-disjunctive assumption). 

The results of \cite{BNRSW16} also give a conditional expectation onto $C^*_{r}(\iso(\gt)^\circ)$ under the additional assumption that $\iso(\gt)^\circ$ is closed. We now give a condition on $S$ which implies this. 

\begin{lem}
	Let $S$ be an inverse semigroup satisfying \eqref{eq:H}. Consider the following statements:
	\begin{enumerate}		
		\item \label{it1:WsClosedLemma} For all $s\in S$ the set $\W_s$ has a finite cover. 
		\item \label{it2:WsClosedLemma} $\iso(\gt(S))^\circ$ is closed.
		\item \label{it3:WsClosedLemma} $\iso(\gt(S))^\circ = \gt(S^\iso)$.
		\item \label{it4:WsClosedLemma} Every tight filter in $E(S)$ is an ultrafilter.
	\end{enumerate} 
Then \ref{it1:WsClosedLemma} $\implies$ \ref{it2:WsClosedLemma}, \ref{it1:WsClosedLemma} $\implies$ \ref{it3:WsClosedLemma},  \ref{it4:WsClosedLemma} $\implies$ \ref{it3:WsClosedLemma}, and \ref{it4:WsClosedLemma} + \ref{it2:WsClosedLemma} $\implies$ \ref{it1:WsClosedLemma}
\end{lem}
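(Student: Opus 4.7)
The plan revolves around a single auxiliary object: for each $s\in S$, set
\[
V_s := \bigcup_{e\in\W_s} D_e \subseteq D_{s^*s}.
\]
By \cite[Lemma~4.9]{EP16}, each $\Theta(s,D_e)$ with $e\in\W_s$ is open and contained in $\iso(\gt(S))$, so $\Theta(s,V_s)\subseteq\iso(\gt(S))^\circ$; the four implications all come down to describing $V_s$ well enough for the task at hand. I will repeatedly use one elementary fact: $\W_s$ is downward closed, since $e\in\W_s$ and $0<f\leqslant e$ force every $0<g\leqslant f$ to satisfy $g\leqslant e$ and hence $sgs^*g\neq 0$.

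For \ref{it1:WsClosedLemma} $\implies$ \ref{it3:WsClosedLemma} and \ref{it1:WsClosedLemma} $\implies$ \ref{it2:WsClosedLemma} I would fix a finite cover $\{e_1,\dots,e_n\}$ of $\W_s$. Downward closedness promotes $\{ee_1,\dots,ee_n\}$ to a cover of $e$ for each $e\in\W_s$, so the standard fact that a tight filter meets every finite cover of any of its elements forces every $\xi\in D_e$ to contain some $e_i$; hence $V_s=\bigcup_{i=1}^n D_{e_i}$ is a compact open, and therefore closed, subset of the Hausdorff space $\Et(S)$. The key step is then the local identity $\iso(\gt(S))^\circ\cap\Theta(s,D_{s^*s})=\Theta(s,V_s)$. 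The inclusion $\supseteq$ is immediate; for $\subseteq$ I would take $[s,\xi]$ in the left side and an open $U\subseteq D_{s^*s}$ with $\xi\in U$ and $\Theta(s,U)\subseteq\iso(\gt(S))$. Each ultrafilter $\eta\in U$ contains some $f$ with $D_f\subseteq U$, whence \cite[Lemma~4.9]{EP16} gives $f\in\W_s$ and $\eta\in V_s$; density of $\Eu(S)$ in $\Et(S)$ and closedness of $V_s$ then yield $U\subseteq V_s$, so $\xi\in V_s$. Since $e_i\in\W_s$ forces $se_i\in S^\iso$, each $\Theta(s,D_{e_i})=\Theta(se_i,D_{e_i})$ sits in $\gt(S^\iso)$, so the identity gives $\iso(\gt(S))^\circ\subseteq\gt(S^\iso)$; combined with Proposition~\ref{prop:SisoGiso} this is \ref{it3:WsClosedLemma}. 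For \ref{it2:WsClosedLemma}, closedness of $V_s$ in $D_{s^*s}$ means that for any $[s,\xi]$ outside $\iso(\gt(S))^\circ$ the set $\Theta(s,D_{s^*s}\setminus V_s)$ is an open neighborhood of $[s,\xi]$ disjoint from $\iso(\gt(S))^\circ$.

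For \ref{it4:WsClosedLemma} $\implies$ \ref{it3:WsClosedLemma} the argument shortens: under \ref{it4:WsClosedLemma} each $\xi\in\Et(S)=\Eu(S)$ has $\{D_e:e\in\xi\}$ as a neighborhood basis, so for any $[s,\xi]\in\iso(\gt(S))^\circ$ with open neighborhood $\Theta(s,U)\subseteq\iso(\gt(S))$ I can find $e\in\xi$ with $D_e\subseteq U$; \cite[Lemma~4.9]{EP16} then gives $e\in\W_s$, whence $[s,\xi]=[se,\xi]\in\gt(S^\iso)$. For \ref{it4:WsClosedLemma} $+$ \ref{it2:WsClosedLemma} $\implies$ \ref{it1:WsClosedLemma} I would combine the previous step with \ref{it2:WsClosedLemma} to see that $\gt(S^\iso)$ is closed, so intersecting with $\Theta(s,D_{s^*s})$ and pulling back under $\Theta(s,\cdot)$ shows $V_s$ is a closed, hence compact, subset of $D_{s^*s}$. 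Extracting a finite subcover $V_s=\bigcup_{i=1}^n D_{e_i}$ from $\{D_e:e\in\W_s\}$, I would verify that $\{e_1,\dots,e_n\}$ covers $\W_s$ by fixing nonzero $f\in\W_s$, extending its principal filter to an ultrafilter $\eta\in D_f$ (using \ref{it4:WsClosedLemma} to place $\eta$ in $\Et(S)$), and observing that $\eta\in V_s$ then forces some $e_i\in\eta$ and hence $fe_i\neq 0$. The step I expect to require the most care is the local identity in the first block, where the finite-cover hypothesis has to be combined with density of $\Eu(S)$ in order to handle tight filters that need not be ultrafilters.
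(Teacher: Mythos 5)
Your proof is correct. The ingredients are the same ones the paper uses --- \cite[Lemma~4.9]{EP16}, the neighbourhood bases $\{D_e\}_{e\in\xi}$ at ultrafilters, density of $\Eu(S)$ in $\Et(S)$, and compactness of $\Theta(s,D_{s^*s})$ --- but you package the first two implications differently. The paper proves (i)$\Rightarrow$(ii) and (i)$\Rightarrow$(iii) by two separate contradiction arguments built on the basic open sets $U(s^*s,C)$ and $U(x,Y\cup C)$; you instead isolate $V_s=\bigcup_{e\in\W_s}D_e$, show under (i) that it is clopen (it equals $\bigcup_{i}D_{e_i}$ for a finite cover $\{e_1,\dots,e_n\}$ of $\W_s$, via downward closedness of $\W_s$ and the fact that a tight filter meets every finite cover of each of its elements), and then prove the single local identity $\iso(\gt(S))^\circ\cap\Theta(s,D_{s^*s})=\Theta(s,V_s)$, from which (ii) and (iii) both drop out without further contradiction arguments. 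This is a genuine, if modest, streamlining; it also makes visible that your $V_s$ is exactly the set $Z_s$ the paper introduces immediately after this lemma, and your identification $V_s=\bigcup_i D_{e_i}$ is essentially Lemma~\ref{lem:Zdef}, which the paper proves by ultrafilter density rather than by the cover characterization of tightness from \cite{DM14, Ex19} --- either route is legitimate. Your arguments for (iv)$\Rightarrow$(iii) and for (iv)$+$(ii)$\Rightarrow$(i) coincide with the paper's. Two trivial points of hygiene: when you produce $f\in\eta$ with $D_f\subseteq U\subseteq D_{s^*s}$ you should replace $f$ by $fs^*s$ so that $f\leqslant s^*s$ as the definition of $\W_s$ requires (the paper elides this as well), and condition (iv) is not actually needed to place an ultrafilter containing a nonzero $f$ inside $\Et(S)$, since ultrafilters are tight by definition.
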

\begin{proof}
	Suppose that \ref{it1:WsClosedLemma} holds, take $\gamma = [s,\xi]\in \overline{\iso(\gt(S))^\circ}$, and let $C\subseteq \W_s$ be a finite cover. If some $c\in C$ is in $\xi$, then $[s,\xi] = [sc, \xi]$ with $sc\in S^\iso$, and so $\gamma\in \iso(\gt(S))^\circ$ and we are done. Otherwise we assume that $C\cap \xi = \emptyset$, and consider the open set $U(s^*s, C)$ around $\gamma$. By the definition of closure, $\Theta(s, U(s^*s, C))$ intersects $\iso(\gt(S))^\circ$. Since both are open we can find $e\leqslant s^*s$ with $\Theta(s, D_e)\subseteq \iso(\gt(S))^\circ\cap \Theta(s, U(s^*s, C))$, and so $D_e\subseteq U(s^*s, C) = D_{s^*s} \setminus \bigcup_{c\in C} D_c$ implying $D_e \cap D_c = \emptyset$ for all $c\in C$. But $\Theta(s, D_e)\subseteq \iso(\gt(S))^\circ$ implies $e\in \W_s$, so there must be $c\in C$ with $ce \neq 0$, a contradiction. Thus $\iso(\gt(S))^\circ$ is closed, and \ref{it1:WsClosedLemma} $\implies$ \ref{it2:WsClosedLemma}.
	
	Still supposing \ref{it1:WsClosedLemma} holds, take $[s,\xi]\in \iso(\gt(S))^\circ$. Then we can find a basic open set $U(x, Y)$ around $[s,\xi]$ with $\Theta(s, U(x, Y))$ contained in $\iso(\gt(S))$. If $c\in \xi$ for some $c\in C$ we would have $[s,\xi] = [sc,\xi]$ with $sc\in S^\iso$ and we would be done. Assuming then that $C\cap \xi = \emptyset$, we have that $\xi\in U(x, Y\cup C)\subseteq U(x, Y)$. Since $U(x, Y\cup C)$ is nonempty and open, we can find $e\in E(S)$ such that $D_e\subseteq U(x, Y\cup C)$, implying that $\Theta(s, D_e)\subseteq \iso(\gt(S))$. Then \cite[Lemma~4.9]{EP16} implies $e\in \W_s$. But $D_e\subseteq U(x,Y\cup C)$ implies $ce = 0$ for all $c\in C$, which contradicts the fact that $C$ covers $\W_s$. This contradiction means  $C\cap \xi = \emptyset$ cannot be true, and so as before $sc\in S^\iso$ implies $[s,\xi] = [sc,\xi]\in \gt(S^\iso)$ and \ref{it1:WsClosedLemma} $\implies$ \ref{it3:WsClosedLemma}. 
	
If we suppose that every tight filter is an ultrafilter, then 
$\{D_e: e\in \xi\}$ forms a neighbourhood base for $\xi$ and so for each $[s,\xi]\in \iso(\gt(S))^\circ$ we can find $e\in\xi$ with $\Theta(s, D_e)\subseteq \iso(\gt(S))$ which implies $e\in \W_s$ again by \cite[Lemma~4.9]{EP16}, so as above we have $[s,\xi] = [se,\xi]$ with $se\in S^\iso$. Thus \ref{it4:WsClosedLemma} $\implies$ \ref{it3:WsClosedLemma}. 
	
	Finally, suppose $\iso(\gt(S))^\circ$ is closed and that every tight filter is an ultrafilter, and let $s\in S$. Then $U:=\Theta(s, D_{s^*s})\cap \iso(\gt(S))^\circ$ is a closed subset of the compact set $\Theta(s, D_{s^*s})$, hence is compact. If $[s,\xi]\in U$, then the hypothesis that $\xi$ be an ultrafilter implies there exists $e\in \xi$ with $\Theta(s, D_e)\subseteq U$; again such an $e$ is weakly fixed by $s$. Thus $\{\Theta(s, D_e): e\in \W_s\}$ is an open cover of $U$. Passing to a finite subcover gives $C\fs \W_s$ such that  $\{\Theta(s, D_e): e\in C\}$ covers $U$. If $e\in \W_s$ we have $\Theta(s, D_e)\subseteq U$, which means it intersects $\Theta(s, D_c)$ for some $c\in C$, implying $ec \neq 0$. Hence $C$ is a finite cover for $\W_s$ and \ref{it4:WsClosedLemma} + \ref{it2:WsClosedLemma} $\implies$ \ref{it1:WsClosedLemma}. 
\end{proof}
According to \cite[Proposition~4.1]{BNRSW16}, when $\iso(\g)^\circ$ is closed there is a faithful conditional expectation from $C^*_r(\g)$ to $C^*_r(\iso(\g)^\circ)$ given on $C_c(\g)$ by function restriction. We would like to describe it in our situation at the level of the semigroup. 
\begin{lem}\label{lem:Zdef}
	Suppose $D\subseteq E(S)$ is downwards closed (i.e. $d\in D$ and $e\leqslant d$ implies $e\in D$). Suppose that $B,C\subseteq D$ and that $B,C$ are finite covers of $D$. Then $\cup_{c\in C}D_c = \cup_{b\in B}D_b$. 	
\end{lem}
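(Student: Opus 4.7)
The plan is to prove the inclusion $\bigcup_{b\in B}D_b \subseteq \bigcup_{c\in C}D_c$; the reverse inclusion then follows by swapping the roles of $B$ and $C$.

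First I would fix $\xi \in D_b$ for some $b \in B$ (so $b \in \xi$) and form the finite set
\[
C_b := \{cb : c\in C,\ cb \neq 0\} \subseteq \{f\in E(S) : f \leqslant b\}.
\]
The first task is to verify that $C_b$ is a cover of $b$ in the sense of the paper. Given any nonzero $f \leqslant b$, downwards closedness of $D$ forces $f\in D$, so because $C$ is a cover of $D$ there exists $c\in C$ with $cf \neq 0$. The relation $f \leqslant b$ yields $cf = cbf$, so $cb \neq 0$ and $(cb)f = cf \neq 0$, producing the required element of $C_b$. Note also that $cb \leqslant b$, so $C_b$ does sit inside $\{f : f\leqslant b\}$, which is needed for it to count as a cover of $b$.

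At this point I would invoke the standard characterization (implicit in \cite{Ex08, EP16}) that a filter $\xi$ lies in $\Et(S)$ precisely when every finite cover of every element of $\xi$ meets $\xi$. Applied to $b \in \xi$ and the finite cover $C_b$, this produces some $c \in C$ with $cb \in \xi$; upward closure of $\xi$ combined with $cb \leqslant c$ then gives $c \in \xi$, i.e.\ $\xi \in D_c \subseteq \bigcup_{c\in C}D_c$, as required.

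The only real obstacle here is invoking the tight-filter characterization correctly and checking that $C_b$ genuinely qualifies as a cover of $b$; once those two points are pinned down, the whole argument reduces to a short manipulation using only downwards closedness of $D$, upwards closedness of filters, and the identity $cf = cbf$ when $f \leqslant b$.
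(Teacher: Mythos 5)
Your argument is correct, but it runs along a genuinely different track from the paper's. The paper fixes $\xi\in D_c$, treats the ultrafilter case first by contradiction (if $\xi$ missed every $D_b$, one could multiply together idempotents $e_b\in\xi$ with $be_b=0$ to get a nonzero element of $D$ killed by all of $B$), and then passes to general tight filters by approximating with a net of ultrafilters and using that $\bigcup_{b\in B}D_b$ is closed. You instead fix $\xi\in D_b$, build the refined finite cover $C_b=\{cb: c\in C,\ cb\neq 0\}$ of $b$ (the verification via $cf=cbf$ for $f\leqslant b$ is exactly right, and downwards closedness of $D$ is used in the same place as in the paper), and then invoke the fact that a tight filter meets every finite cover of each of its elements. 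That fact is not stated in the paper, which only defines $\Et(S)=\overline{\Eu(S)}$, so strictly speaking you are importing an external characterization; but the direction you need is easy to supply from the paper's definition: ultrafilters meet finite covers of their elements by the standard maximality argument (the same $\prod_b e_b$ trick the paper uses), and the set of filters satisfying ``$e\in\xi\Rightarrow Z\cap\xi\neq\emptyset$'' is the complement of the basic open set $U(e,Z)$, hence closed and therefore contains $\overline{\Eu(S)}$. In effect you have factored the paper's ultrafilter-plus-density argument through a general lemma about tight filters, which makes the proof of the lemma itself shorter and symmetric; the paper's version is more self-contained. If you keep your route, add a line justifying the tight-filter/cover property (or cite it, e.g.\ via \cite{DM14, Ex19} or \cite{EP16}), since the paper never states it for filters.
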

\begin{proof}
    First suppose that $\xi\in D_c$ for some $c\in C$ and that $\xi$ is an ultrafilter. If $\xi\notin \cup_{b\in B}D_b$, then for all $b$ we can find $e_b\in \xi$  with $e_b\leqslant c$ and $be_b = 0$. Then if we let $e = \prod_{b\in B} e_b\in \xi$, the fact that $D$ is downwards closed implies $e\in D$. But then $be = 0$ for all $b\in B$, contradicting that $B$ is a cover. Hence $\xi \in \cup_{b\in B}D_b$. Now if $\xi\in D_c$ is an arbitrary tight filter, find a net $(\xi_\lambda)$ of ultrafilters in $D_c$ converging to it. By the above argument we have $\xi_\lambda\in\cup_{b\in B}D_b$ for all $\lambda$, and since this set is closed we have that $\xi\in \cup_{b\in B}D_b$. Then $\cup_{c\in C}D_c\subseteq \cup_{b\in B}D_b$, and a symmetric argument gives the other containment.
\end{proof}
So by Lemma~\ref{lem:Zdef}, for any $s\in S$ we can define
\[
Z_s = \bigcup_{c\in C} D_c\hspace{1cm}\text{ where $C$ is any finite cover of $\W_s$}. 
\]
Then $Z_s$ is a compact open subset of $\Et(S)$, and so its characteristic function is an element of $\Ct(S)$. In cases where $\W_s$ is empty we define $Z_s$ to be the empty set, so its characteristic function is 0.

In the following proof, we will use the standard fact that if $U$ and $V$ are compact open bisections in a locally compact \'etale groupoid, then $UV:= \{\gamma\delta: \gamma\in U, \delta\in V\}$ and $U^{-1} := \{\gamma^{-1}: \gamma\in U\}$ are compact open bisections and we have $1_U1_V = 1_{UV}$ and $1_{U^{-1}} = 1_{U}^*$.
\begin{prop}\label{prop:CondExp}
	Let $S$ be an inverse semigroup satisfying \eqref{eq:H}, and suppose that $\W_s$ has a finite cover for all $s\in S$. Then there is a faithful conditional expectation $\mathcal{E}:C^*_r(\gt(S)) \to C^*_r(\gt(S^\iso))$ given on generators by
	\begin{equation}\label{eq:EonSiso}
	\mathcal{E}(T_s) = T_s 1_{Z_s}.
	\end{equation}
\end{prop}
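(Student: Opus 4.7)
The plan is to obtain the conditional expectation abstractly from \cite[Proposition~4.1]{BNRSW16} and then verify the formula on the generators via the bisection convolution identity $1_U 1_V = 1_{UV}$.

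First I would invoke the preceding lemma: the hypothesis that every $\W_s$ has a finite cover is exactly condition \ref{it1:WsClosedLemma}, which gives both that $\iso(\gt(S))^\circ$ is closed and that $\iso(\gt(S))^\circ = \gt(S^\iso)$. Since $\gt(S)$ is Hausdorff by \eqref{eq:H}, \cite[Proposition~4.1]{BNRSW16} then yields a faithful conditional expectation $\mathcal{E}: C^*_r(\gt(S)) \to C^*_r(\gt(S^\iso))$ whose action on $C_c(\gt(S))$ is restriction of functions to $\gt(S^\iso)$. Applying this to $T_s = 1_{\Theta(s,D_{s^*s})}$ gives $\mathcal{E}(T_s) = 1_{\Theta(s,D_{s^*s}) \cap \gt(S^\iso)}$, viewed inside $C^*_r(\gt(S))$ via extension by $0$.

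Next, since $Z_s$ is a compact open subset of the unit space it is a compact open bisection of $\gt(S)$, so the bisection identity gives
\[
T_s \cdot 1_{Z_s} = 1_{\Theta(s,D_{s^*s}) \cdot Z_s} = 1_{\Theta(s,\, D_{s^*s} \cap Z_s)}.
\]
The formula \eqref{eq:EonSiso} therefore reduces to the set-theoretic claim: for every $\xi \in D_{s^*s}$, $\xi \in Z_s$ if and only if $[s,\xi] \in \gt(S^\iso)$. The direction $(\Rightarrow)$ is immediate: if $\xi \in D_c$ for some $c$ in a fixed finite cover $C$ of $\W_s$, then $c \in \W_s$ and \cite[Lemma~4.9]{EP16} give $\Theta(s,D_c) \subseteq \iso(\gt(S))$, and since this is an open neighbourhood of $[s,\xi]$ we conclude $[s,\xi] \in \iso(\gt(S))^\circ = \gt(S^\iso)$.

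The converse is the main technical point, and I would handle it by the same contradiction argument used in the preceding lemma's proof of \ref{it1:WsClosedLemma} $\Rightarrow$ \ref{it3:WsClosedLemma}: if $[s,\xi] \in \gt(S^\iso)$ but $C \cap \xi = \emptyset$, one refines a neighbourhood of $[s,\xi]$ lying in $\iso(\gt(S))$ to a basic set $\Theta(s,D_e)$ with $D_e \subseteq U(s^*s,C)$, so that $e \in \W_s$ yet $ec = 0$ for every $c \in C$, contradicting that $C$ covers $\W_s$. The main thing to watch here is that this refinement step works for arbitrary tight filters $\xi$ rather than requiring an ultrafilter neighbourhood base, which is precisely why the earlier implication was phrased using the basic open sets $U(x,Y)$ and can therefore be reused verbatim in our setting.
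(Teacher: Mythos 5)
Your proposal is correct and follows essentially the same route as the paper: both obtain $\mathcal{E}$ abstractly from \cite[Proposition~4.1]{BNRSW16} via the preceding lemma (closedness of $\iso(\gt(S))^\circ$ and its identification with $\gt(S^\iso)$) and then verify that $T_s 1_{Z_s} = 1_{\Theta(s,Z_s)}$ coincides with the restriction $1_{\Theta(s,D_{s^*s})\cap \gt(S^\iso)}$. The only (harmless) divergence is in the two set-theoretic inclusions: your direct observation that $\Theta(s,D_c)$ is an open neighbourhood of $[s,\xi]$ inside $\iso(\gt(S))$ is a mild simplification of the paper's density-of-ultrafilters argument for that direction, while for the converse the paper invokes the proof of Lemma~\ref{lem:Zdef} where you rerun the covering contradiction from the preceding lemma; both are valid.
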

\begin{proof}
	Since $\W_s$ has a finite cover for all $s\in S$, Lemma~\ref{lem:SisoISSG} implies that $\iso(\gt(S))^\circ = \gt(S^\iso)$ is closed. Thus according to \cite[Propositon~4.1]{BNRSW16} there is a faithful conditional expectation onto $C^*_r(\gt(S^\iso))$ given on $C_c(\gt(S))$ by function restriction. Thus, we will be done if we can show that \eqref{eq:EonSiso} does indeed describe function restriction on the set $\{T_s\}_{s\in S}$. 
	
	Recall that $T_s = 1_{\Theta(s, D_{s^*s})}$. So take $[s,\xi]\in \Theta(s, D_{s^*s})$, and first assume $[s,\xi]\in \gt(S^\iso)$. Then there exists $e\in \xi$ with $se\in S^\iso$. If $C$ is any finite cover of $\W_s$, the proof of Lemma~\ref{lem:Zdef} implies that $D_e\subseteq Z_s$, and hence $\xi\in Z_s$. Thus $[s,\xi] = [s, \xi][e,\xi]$ and so $T_s1_{Z_s}([s,\xi]) = T_s1_{Z_s}([s,\xi][e,\xi]) = T_s([s,\xi])$. 
	
	On the other hand, if $[s,\xi]\in \Theta(s, D_{s^*s})\setminus \gt(S^\iso)$, we again have that $\gt(S^\iso) = \iso(\gt(S))^\circ$ is closed and so we can find an open set $U$ around $\xi$ with $[s,\xi] \in \Theta(s, U)$ and $\Theta(s, U)\cap \gt(S^\iso) = \emptyset$. For any ultrafilter $\eta\in U$, supposing that $\eta\in Z_s$ gives that there exists $e\in \eta$ with $D_e\subseteq U\cap Z_s$ (because $Z_s$ is open). If $C$ is a finite cover for $\W_s$, then we would have $ec \neq 0$ for some $c\in C$, and so $ec$ would be weakly fixed by $s$, contradicting $\Theta(s, U)\cap \gt(S^\iso) = \emptyset$. Thus every ultrafilter $\eta\in U$ is not in $Z_s$, and hence $T_s1_{Z_s}([s,\eta]) = T_s1_{Z_s}([s,\eta][e,\eta]) = 0$. Since points of the form $[s,\eta]$ with $\eta$ an ultrafilter are dense in $\Theta(s, U)$, we conclude that $T_s1_{Z_s}$ is zero there, and so also at $[s,\xi]$. 
\end{proof}

\section{Right LCM monoids}\label{sec:LCM}

To every right LCM monoid $P$ one can form an inverse semigroup $S$ generated by the left multiplication functions. In \cite{StLCM} we showed that the boundary quotient C*-algebra $\Q(P)$ is isomorphic to $\Ct(S)$. In this section we apply our results give a uniqueness theorem for $\Q(P)$.

A semigroup $P$ is called {\em left cancellative} if $pq = pr$ implies $q=r$ for all $p,q,r\in P$, and is called {\em right cancellative} if $qp = rp$ implies $q = r$ for all $p,q,r\in P$. A monoid is called {\em right LCM} if it is left cancellative and, for all $p,q\in P$, the intersection of principal right ideals $pP\cap qP$ either empty or equal to $rP$ for some $r\in P$. Recall that for a monoid $P$, its {\em group of units} is $U(P) = \{p\in P: pq = 1 = qp\text{ for some }q\in P\}$. 

The {\em core} submonoid of a right LCM monoid $P$ (see \cite{CL07} and \cite{StLCM}) is 
\[
P_c = \{p\in P: pP\cap qP\neq 0 \text{ for all }q\in P\}.
\]
We proved in \cite[Proposition 4.5]{StLCM} that $P_c$ is a submonoid, that $pq\in P_c$ implies $p,q\in P_c$, and $p,q\in P_c$ and $pP\cap qP = rP$ implies $r\in P_c$.

We now give the definition of its universal C*-algebra of $P$ in the special case that $P$ is an LCM monoid. This definition is due to Li \cite{Li12}. Let $\J(P) := \{pP:p\in P\}\cup\{\emptyset\}$ and call this the set of {\em constructible ideals}.  Then the {\em universal C*-algebra of $P$}, denoted $C^*(P)$ is the universal C*-algebra generated by a set of isometries $\{s_p\}_{p\in P}$ and projections $\{e_X\}_{X\in \J(P)}$ subject to the relations
\begin{enumerate}
	\item $s_ps_q = s_{pq}$ for all $p,q\in P$, 
	\item $v_pe_Xv_{p}^* = e_{pX}$ for all $p\in P$ and $X\in \J(P)$, 
	\item $e_P = 1$, $e_\emptyset = 0$, and
	\item $e_Xe_Y = e_{X\cap Y}$ for all $X, Y\in \J(P)$. 
\end{enumerate}
A {\em foundation set} is a finite subset $F\subseteq P$ such that for all $p\in P$ there exists $f\in P$ such that $fP\cap pP\neq \emptyset$. Then the {\em boundary quotient of $C^*(P)$}, denoted $\Q(P)$, is the universal C*-algebra generated by sets $\{s_p\}_{p\in P}$ and $\{e_X\}_{X\in \J(P)}$ with the same relations as above, and also satisfies
\[
\bigvee_{f\in F}e_{fP} = 1\hspace{1cm}\text{for all foundation sets }F\subseteq P.
\]
We proved in \cite{StLCM} that $\Q(P)\cong \Ct(S)$ for an inverse semigroup $S$ derived from $P$. We will use this isomorphism to prove the following.
\begin{theo}\label{th:LCMmain}
	Let $P$ be an LCM monoid, let $\Q(P)$ denote its boundary quotient C*-algebra with canonical generators $\{s_p: p\in P\}$ and let $\Q_c(P) = C^*(s_p: p\in P_c)$ be the subalgebra generated by the core submonoid. Suppose that $\Q(P) \cong C^*_r(\gt(S))$ and that $S$ satisfies \eqref{eq:H}. Then a $*$-homomorphism $\pi: \Q(P)\to B$ is injective if and only if it is injective on $\Q_c(P)$. 	
\end{theo}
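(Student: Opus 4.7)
I would deduce the theorem from Theorem~\ref{thm:isguniqueness} applied to the inverse semigroup $S$ of \cite{StLCM}. The hypotheses $\Q(P)\cong C^*_r(\gt(S))$ (a weak containment condition) and \eqref{eq:H} are exactly what is required to invoke Theorem~\ref{thm:isguniqueness}, which then reduces injectivity of $\pi$ to injectivity on the subalgebra $C^*(T_s:s\in S^\iso)\subseteq C^*_r(\gt(S))$. Hence the real work is to identify, under the isomorphism $\Q(P)\cong C^*_r(\gt(S))$ of \cite{StLCM}, the subalgebra $C^*(T_s:s\in S^\iso)$ with $\Q_c(P)$. Recall that $S$ is generated by the partial left-multiplications $\lambda_p:P\to pP$ and that the isomorphism sends $s_p$ to $T_{\lambda_p}$.

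For the containment $\Q_c(P)\subseteq C^*(T_s:s\in S^\iso)$, I would verify directly from the definition \eqref{eq:Sisodef} that $\lambda_p\in S^\iso$ whenever $p\in P_c$. Since $\lambda_p^*\lambda_p=\id_P$, the condition is that $\lambda_p f \lambda_p^* f\neq 0$ for every nonzero idempotent $f\leqslant \id_P$ of $E(S)$; writing $f$ as a meet of basic idempotents $\id_{qP}$ and unpacking, this becomes the assertion that $pqP\cap qP\neq\emptyset$ for every $q\in P$, which follows from the core property of $p$ combined with the right LCM structure.

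For the reverse containment $C^*(T_s:s\in S^\iso)\subseteq \Q_c(P)$, I would show that any $s\in S^\iso$ can be expressed as a product involving only generators $T_{\lambda_p}$ with $p\in P_c$ together with the canonical boundary projections $e_X$ of $\Q(P)$ — the latter already sitting in $\Q_c(P)$ via the boundary quotient relations. The key ingredients here are Proposition~\ref{prop:SisoGiso} and \cite[Lemma~4.9]{EP16}, which characterize germs in $\iso(\gt(S))^\circ$ via weakly fixed idempotents, and the structural description of the core in \cite[Proposition~4.5]{StLCM}.

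The main obstacle will be this reverse containment, which requires a delicate normal-form analysis: one must show that for any $s\in S^\iso$, written as a product of generators $\lambda_{p_i}$ and $\lambda_{q_i}^*$, the weakly fixed condition forces the left-multiplicative factors to lie in $P_c$ after absorption by the appropriate domain idempotent. Once both containments are established, Theorem~\ref{th:LCMmain} drops out from Theorem~\ref{thm:isguniqueness}.
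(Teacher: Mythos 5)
Your overall strategy --- invoke Theorem~\ref{thm:isguniqueness} (the hypotheses $\Q(P)\cong C^*_r(\gt(S))$ and \eqref{eq:H} are indeed exactly what is needed for that) and then pass between $C^*(T_s:s\in S^\iso)$ and $\Q_c(P)$ --- starts the same way as the paper, but the bridge you propose does not exist. The reverse containment $C^*(T_s:s\in S^\iso)\subseteq\Q_c(P)$, which you correctly flag as the main obstacle, is in fact \emph{false} in general, so no amount of normal-form analysis will establish it. By Lemma~\ref{lem:SisoISSG} one always has $E(S)\subseteq S^\iso$, so $C^*(T_s:s\in S^\iso)$ always contains $C^*(e_X:X\in\J(P))$; but the remark following Theorem~\ref{th:LCMmain} points out that $\Q_c(P)$ contains this diagonal subalgebra only when every element of $P$ is invertible. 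Concretely, for $P$ the free monoid on two generators one has $P_c=\{1\}$, so $\Q_c(P)=\CC 1$ inside $\Q(P)\cong\mathcal{O}_2$, while $C^*(T_s:s\in S^\iso)$ contains the full (infinite-dimensional) diagonal $C(\Et(S))$. Your supporting claim that the projections $e_X$ ``already sit in $\Q_c(P)$ via the boundary quotient relations'' is the precise point of failure: those relations only force $\bigvee_{f\in F}e_{fP}=1$ over foundation sets, and make $s_p$ unitary for $p\in P_c$; they do not put individual $e_X$ into the subalgebra generated by $\{s_p:p\in P_c\}$.

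What the paper does instead is a compression (localization) argument that never identifies the two subalgebras. Given a finite linear combination $a=\sum_f\lambda_fT_{[p_f,q_f]}$ with each $[p_f,q_f]\in S^\iso$ and $\pi(a)=0$, one decomposes the support of $a$ into pieces as in \eqref{eq:disjointification}, and for each piece picks a point $\gamma$ with ultrafilter unit $\xi$ and an element $b\in P$ with $bP\in\xi$ adapted to that piece. Lemma~\ref{lem:LCM2} kills the terms indexed outside the piece under compression by $T_{[b,1]}$, and Lemma~\ref{lem:LCM1} shows $T_{[b,1]}^*T_{[p_f,q_f]}T_{[b,1]}\in\Q_c(P)$ for the surviving terms; hence $T_{[b,1]}^*aT_{[b,1]}\in\Q_c(P)$, injectivity of $\pi$ on $\Q_c(P)$ forces $T_{[b,1]}^*aT_{[b,1]}=0$, and evaluating at $\gamma_b^{-1}\gamma\gamma_b$ recovers $a(\gamma)=0$. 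Density of such $a$ and of such $\gamma$ finishes the proof. If you want to salvage your write-up, replace the two-containment plan with this compression step; your first containment ($S_c\subseteq S^\iso$) is not actually needed for either direction of the theorem.
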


The proof is not immediate from Theorem~\ref{thm:isguniqueness}; the rest of this section will be taken to prove it. 

First, a word about the Hausdorff condition is in order. In \cite[Proposition~4.1]{StLCM} we translated the condition \eqref{eq:H} in this paper to one on $P$. As mentioned in \cite{StLCM}, this will certainly be satisfied when $P$ is cancellative (when embedded in a group, say) but also holds when the counterexamples to right cancellativity have a finite cover, in some sense. See \cite[Equation~(H)]{StLCM} for the precise statement

Fix, for the rest of this section, an LCM monoid $P$. We first recall the form of the inverse semigroup $S$ given in \cite[Proposition 3.2]{StLCM}. Put an equivalence relation $\sim$ on $P\times P$ by saying $(p,q)\sim (a,b)$ if and only if there exists $u\in U(P)$ with $pu = a$ and $qu = b$. Let $S = \{[p,q]: p,q\in P\}\cup\{0\}$ where 0 is an ad-hoc zero element. Then $S$ becomes an inverse monoid when given the operation
\[
[p,q][r,t] = \begin{cases}[pq', tr'] & qP\cap rP = \ell P\text{ with }qq'=rr' = \ell\\
0&qP\cap rP = \emptyset\end{cases}.
\]
Then we have $[p,q]^* = [q,p]$ and $E(S) = \{[p,p]: p\in P\}\cup\{0\}$. We note that the LCM property implies that $\J(P) = \{pP: p\in P\}\cup\{\emptyset\}$ is a semilattice under intersection and is isomorphic to $E(S)$. In what follows we  use this identification, and so write
\[
D_{pP} = \{\xi\subset \J(P): \xi\text{ is a tight filter and } pP\in \xi\}.
\]
For any $X\in \J(P)$ and $p\in P$, write $p^{-1}X:= \{y\in P: py\in X\}$. The LCM property implies that $p^{-1}X\in \J(P)$. Then we have
\[
\theta_{[p,q]}(\xi) = \{p(q^{-1}X): X\in \xi)\}^{\leqslant}.
\]
Referring to \cite[Lemma~4.9]{StLCM}, we have that 
\[
S^\iso =\{[p,q]: paP\cap qaP \neq \emptyset\text{ for all }a\in P\}. 
\]
We would like to relate this to another inverse subsemigroup defined in \cite{StLCM}
\[
S_c = \{[p,q]: p,q\in P_c\}.
\]
In general, these will not be equal. However, we can relate them with the following two lemmas.  
\begin{lem}\label{lem:LCM2}
	Suppose that $[p,q]\in S^\iso$ with $pP\cap qP = rP$. Then $D_{rP} = D_{pP} = D_{qP}$. Furthermore, if either $bP\cap qP \neq \emptyset$ or $bP\cap pP\neq \emptyset$, then $bP\cap rP\neq\emptyset$. 
\end{lem}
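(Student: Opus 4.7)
The plan is to handle the trivial inclusions first and then exploit the defining property of $S^\iso$, namely that $paP \cap qaP \neq \emptyset$ for every $a \in P$, to close the loop. For part~(i), one direction $D_{rP} \subseteq D_{pP} \cap D_{qP}$ is immediate from $rP = pP \cap qP \subseteq pP, qP$ and the fact that tight filters are upwards closed.

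For the reverse inclusion $D_{pP} \subseteq D_{rP}$ (the $qP$ case is symmetric), I would first prove it on ultrafilters. Fix an ultrafilter $\xi \in D_{pP}$. Using the standard characterization that $X \in \xi$ iff $X$ meets every element of $\xi$ nontrivially, it suffices to show that for each $Y \in \xi$, the intersection $Y \cap rP$ is nonempty. Since $pP, Y \in \xi$, the meet $Y \cap pP$ lies in $\xi$ and hence is nonempty; being a constructible ideal contained in $pP$, it must equal $paP$ for some $a \in P$. The $S^\iso$ hypothesis applied at $a$ gives a nonempty intersection $paP \cap qaP = mP$, and $mP \subseteq paP \cap qaP \subseteq pP \cap qP = rP$, so any $m' \in mP \subseteq paP \subseteq Y$ is also in $rP$. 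This forces $rP \in \xi$. To extend the ultrafilter conclusion to arbitrary tight filters, note that $D_{rP} = U(rP, \emptyset)$ is clopen (as a basic set in the spectrum) and $\Eu(S)$ is dense in $\Et(S)$; any tight filter $\xi \in D_{pP}$ is a limit of ultrafilters which are eventually in the open set $D_{pP}$, hence eventually in $D_{rP}$ by the ultrafilter case, and $\xi$ remains in the closed set $D_{rP}$.

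For part~(ii), suppose $bP \cap qP \neq \emptyset$, so $bP \cap qP = \ell P$ with $\ell = bu = qv$ for some $u, v \in P$. Applying the $S^\iso$ hypothesis at $a = v$ yields $pvP \cap qvP = mP$ nonempty. Since $m \in qvP$, write $m = qvs = bus$, which puts $m$ in $bP$; and $mP \subseteq pvP \cap qvP \subseteq pP \cap qP = rP$ puts $m$ in $rP$. Thus $m \in bP \cap rP$, and the symmetric argument (using $\ell = bu' = pv'$) handles the case $bP \cap pP \neq \emptyset$.

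I do not anticipate a serious obstacle; the only subtlety is the transition from ultrafilters to tight filters in part~(i), which is routine once one records that $D_{rP}$ is clopen. The central computational trick — passing from $Y \cap pP = paP$ to the $S^\iso$ witness $paP \cap qaP \subseteq rP$ — is also what drives the proof of part~(ii), so it is natural to present the two parts in parallel.
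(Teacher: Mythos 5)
Your argument is correct, but it runs along a genuinely different track from the paper's. The paper's proof is dynamical: since $[p,q]\in S^\iso$, the bisection $\Theta([p,q],D_{qP})$ lies in $\iso(\gt(S))$, so $\theta_{[p,q]}$ fixes \emph{every} tight filter in $D_{qP}$; hence $\xi\in D_{qP}$ forces $pP\in\xi$ and therefore $rP=pP\cap qP\in\xi$ by downward directedness, with no case split between ultrafilters and general tight filters. Part (ii) is handled the same way by picking a tight filter in $D_{bP}\cap D_{qP}$ and observing it must also contain $pP$, hence $rP$. You instead work purely at the level of the semilattice $\J(P)$: you use the ultrafilter characterization ($X\in\xi$ iff $X$ meets every member of $\xi$) together with the ideal-theoretic description $S^\iso=\{[p,q]:paP\cap qaP\neq\emptyset\ \forall a\}$ to show $rP$ meets every $Y\in\xi$, and then pass from ultrafilters to tight filters by density of $\Eu(S)$ and closedness of the compact set $D_{rP}$ --- a step the paper's approach avoids entirely. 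Your part (ii) is even more elementary, being a direct computation with elements of $P$ that never mentions filters. Both proofs are sound; the paper's is shorter because it reuses the machinery behind Proposition~\ref{prop:SisoGiso} (via \cite[Lemma~4.9]{EP16}), while yours is more self-contained and makes the role of the condition $paP\cap qaP\neq\emptyset$ explicit. The only points worth recording carefully in a write-up are the justification that a nonempty constructible ideal contained in $pP$ has the form $paP$, and that $D_{rP}$ is closed because it is compact in the Hausdorff space $\Et(S)$; both are routine.
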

\begin{proof}
	First we note $r$ always exists, taking $a = 1$ in the definition of $S^\iso$. Since $rP\subseteq qP, pP$, we have $D_{rP}\subseteq D_{qP}, D_{pP}$. If $\xi\in D_{qP}$, then $\xi$ is a fixed point for $\theta_{[p,q]}$, and hence $\xi = \theta_{[p,q]}(\xi)\in D_{pP}$, so $pP\in \xi$ as well. Since $\xi$ is closed under intersections, we have $rP\in \xi$ so $D_{qP}\subseteq D_{rP}$. A similar argument gives $D_{pP}\subseteq D_{rP}$, and so they are all equal. Now suppose that $bP\cap qP \neq \emptyset$ and find $\xi\in D_{bP}\cap D_{qP}$. Then since $\theta_{[p,q]}$ fixes $\xi$ we must have $pP\in\xi$ as well, which implies $pP\cap qP = rP\in \xi$ and so $rP\cap bP\neq \emptyset$. 
\end{proof}
The proof of the next lemma follows the same lines as that of \cite[Proposition~4.7]{StLCM}, which in turn follows along the same lines as that of \cite[Proposition~5.5]{CL07}.
\begin{lem}\label{lem:LCM1}
	Suppose that $[p,q]\in S^\iso$ with $pP\cap qP = rP$, and let $b \in rP$. Then $[1,b][p,q][b,1]\in S_c$. 
\end{lem}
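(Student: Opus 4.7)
The plan is to compute $[1,b][p,q][b,1]$ explicitly as some $[x,y]$ and then verify $x,y\in P_c$ directly from the defining condition of $S^\iso$ together with left cancellativity in $P$.

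First I would fix the bookkeeping. Since $r\in pP\cap qP$, write $r=pr_1=qr_2$, and since $b\in rP$, write $b=rc$, so that $b=pb_1=qb_2$ with $b_1:=r_1c$ and $b_2:=r_2c$. Because $b\in rP\subseteq pP\cap qP$ we have $qP\cap bP=bP$, and unrolling the product formula of \cite[Proposition~3.2]{StLCM} one factor at a time gives
\[
[p,q][b,1]=[pb_2,1],\qquad [1,b][pb_2,1]=[x,y],
\]
where $x,y\in P$ are characterised by $b_1x=b_2y=:\ell$ and $\ell P=b_1P\cap b_2P$. That this intersection is non-empty (so the product is non-zero and $x,y$ exist) follows from the $S^\iso$ hypothesis applied with $a=b_1$: it gives $pb_1P\cap qb_1P\neq\emptyset$, which after substituting $pb_1=b=qb_2$ and left cancelling $q$ becomes precisely $b_1P\cap b_2P\neq\emptyset$.

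To finish I would verify $x,y\in P_c$. For arbitrary $z\in P$, applying $S^\iso$ with $a=b_1z$ yields $bzP\cap qb_1zP\neq\emptyset$; substituting $b=qb_2$ and cancelling $q$ produces $u,v\in P$ with $b_2zu=b_1zv$. This common element lies in $b_1P\cap b_2P=b_1xP$, so $b_1zv=b_1xw$ for some $w$, and cancelling $b_1$ gives $xw=zv\in xP\cap zP$. The symmetric argument with $a=b_2z$ (cancelling first $p$, then $b_2$) gives $yP\cap zP\neq\emptyset$, so both $x$ and $y$ lie in $P_c$. No step here is genuinely delicate; the proof is in essence a bookkeeping exercise in left cancellation, and the only care needed is to keep the decomposition $b=pb_1=qb_2$ symmetric, so that the $S^\iso$ hypothesis, applied with the right $a$, collapses after a single cancellation to the required intersection condition.
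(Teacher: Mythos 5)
Your proof is correct, and it takes a genuinely different route from the one in the paper. The paper proves the two non-emptiness statements it needs (first that $bP\cap pq_1r_1P\neq\emptyset$ so that the product collapses to a single $[b_1,p_3]$, and then that $b_1P\cap zP$ and $p_3P\cap zP$ are non-empty for every $z$) by working in the tight spectrum: it picks an ultrafilter in $D_{bP}$ (or its image under $\theta_{[bz,1]}$), uses that such filters are fixed by $\theta_{[p,q]}$ to force both ideals into the same filter, and derives a contradiction from disjointness. You instead never leave the monoid: writing $b=pb_1=qb_2$ symmetrically, you feed the well-chosen elements $a=b_1$, $a=b_1z$, $a=b_2z$ into the algebraic characterisation $S^\iso=\{[p,q]:paP\cap qaP\neq\emptyset\text{ for all }a\}$ and left-cancel, landing directly on $b_1P\cap b_2P\neq\emptyset$ and then on $xP\cap zP\neq\emptyset$, $yP\cap zP\neq\emptyset$ via the generator $\ell=b_1x=b_2y$ of $b_1P\cap b_2P$. (Your two-step evaluation of the product, $[p,q][b,1]=[pb_2,1]$ and then $[1,b][pb_2,1]=[x,y]$, matches the paper's $[b_1,p_3]$ after translating notation: your $x,y$ are the paper's $b_1,p_3$.) What your approach buys is a constructive, purely semigroup-theoretic argument that needs neither Lemma~\ref{lem:LCM2} nor the filter space nor the fact that $\theta$ sends ultrafilters to ultrafilters; what the paper's approach buys is continuity with the filter-theoretic arguments of \cite{CL07} and \cite{StLCM} that the surrounding section is built on. Both are valid; yours is arguably the more elementary proof of this particular lemma.
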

\begin{proof}
	 We note that $[b,b]\leqslant[r,r]\leqslant [q,q]$, so every point in $D_{bP}$ is fixed by $\theta_{[p,q]}$ and hence also by $\theta_{[q,p]}$.
	
	Write $pp_1 = qq_1 = r$ and $b = rr_1$ for some $p_1, q_1, r_1\in P$. If $\xi\in D_{bP}$ then $\theta_{[p,q]}$ fixes $\xi$ and we have that $p(q^{-1}(bP)) =p(q^{-1}(qq_1r_1P)) = pq_1r_1P\in \xi$, and so also intersects $bP$. Then $bP\cap pq_1r_1P = \ell P$ implies there exist $b_1, p_3\in P$ with $bb_1 = pq_1r_1p_3 = \ell$. A straightforward calculation from this gives
	\[
	[1,b][p,q][b,1] = [b_1, p_3].
	\] 
	Our goal is to show that $b_1, p_3\in P_c$. Going for a contradiction, suppose otherwise. If $b_1\notin P_c$, then find $z\in P$ such that $b_1P\cap zP = \emptyset$. Let $\xi\in D_{bP}$ be any ultrafilter, which we recall must be fixed by $\theta_{[p,q]}$. Define $\xi_b = \theta_{[bz,1]}(\xi)$, an ultrafilter (by \cite[Proposition~3.5]{EP16}) which contains $bzP$ and hence also $bP$, which means it must also be fixed by $\theta_{[p,q]}$. Thus $p(q^{-1}(bP)) = pq_1r_1P\in \xi_b$ as above, which implies $\ell P\in \xi_p$. But $\ell P\cap bzP = bb_1P\cap bzP = \emptyset$ by assumption, which contradicts $\ell P, bzP\in \xi_b$. 
	
	On the other hand, if $p_3\notin P_c$, again find $z\in P$ with $p_3P\cap zP = \emptyset$, let $\xi\in D_{bP}$ be any ultrafilter and let $\xi_b = \theta_{[bz,1]}(\xi)$. Then as above $\xi_b$ contains $bzP$ and $\ell P = pq_1r_1p_3P$. Since $\xi_b$ is fixed by $\theta_{[q,p]}$ it must contain $q(p^{-1}(pq_1r_1p_3P))= qq_1r_1p_3P = bp_3P$, which again leads to a contradiction. Hence $[b_1, p_3]\in S_c$ and we are done.
\end{proof}
Before getting to the proof of Theorem~\ref{th:LCMmain}, we recall that in \cite{StLCM}, we proved that the map $T_{[p,q]}\mapsto s_ps_q^*$ extends to a $*$-isomorphism of $\Ct(S)$ onto $\Q(P)$. Then $\Q_c(P)$ can be identified with the subalgebra generated by $\{T_{[p,q]}\}_{[p,q]\in S_c}$. We use this identification in the proof.
\begin{proof}[Proof of Theorem~\ref{th:LCMmain}]
	Suppose that $\pi: \Q(P)\to B$ is a $*$-homomorphism, and that $\pi$ is injective on $\Q_c(P)$. We wish to show that $\pi$ is injective, and by Theorem~\ref{thm:isguniqueness} it will be enough to show that it is injective on $C^*(T_{[p,q]}: [p,q]\in S^\iso)$. Let $F\fs S^\iso$, and for an arbitrary element $f\in F$ write $f = [p_f, q_f]$ for $p_f, q_f\in P$. Consider a finite linear combination
	\[
	a = \sum_{f\in F}\lambda_f T_{[p_f,q_f]} \hspace{1cm} \lambda_f\in \CC.
	\]
	We suppose that $\pi(a) = 0$ and show that this must imply $a = 0$. Denseness of such elements will then give the result. 
	
	As a function on $\gt(S)$, this is a linear combination of characteristic functions on compact open bisections: $T_{[p_f,q_f]} = 1_{\Theta([p,q], D_{r_{f}P})}$, and so the support of $a$ can be written as disjoint pieces of the following form on which it is constant:
	\begin{equation}\label{eq:disjointification}
	\bigcap_{f\in F'}\Theta([p_f, q_f], D_{r_fP}) \setminus \left(\bigcup_{g\in (F')^c}\Theta([p_g, q_g], D_{r_fP})\right)
	\end{equation}
	We will show $a$ is 0 on each such piece. It will be enough to show it is 0 on each $\gamma\in \gt(S)$ such that $r(\gamma) = d(\gamma)= \xi$ is an ultrafilter, as each set \eqref{eq:disjointification} is open and such points are dense. For such an $F'$, $\gamma$ and $\xi$ we must have $r_fP\in \xi$ for all $f\in F'$ and $r_gP\notin\xi$ for all $g\in (F')^c$. Because $\xi$ is an ultrafilter, for each $g\in (F')^c$ we can find $k_g\in P$ such that $k_gP\cap r_gP = \emptyset$ and $k_gP\in \xi$. Then we have 
	$$\bigcap_{f\in F'}r_fP\cap \bigcap_{g\in (F')^c}k_gP\neq \emptyset,$$
	and this intersection must be of the form $bP$ for some $b$; we must have $bP\subseteq r_fP$ for all $f\in F'$ and $bP\cap r_gP = \emptyset$ for all $g\in (F')^c$. Note that $bP\in \xi$.
	
	Then Lemma~\ref{lem:LCM2} implies $T_{[b,1]}^*T_{[p_g,q_g]}T_{[b,1]} = 0$ for all $g\notin F'$, while Lemma~\ref{lem:LCM1} implies that $T_{[b,1]}^*T_{[p_f,q_f]}T_{[b,1]} \in \Q_c(P)$ for all $f\in F'$. Thus $T_{[b,1]}^*aT_{[b,1]}\in \Q_c(P)$, and $\pi(a) = 0$ implies $\pi\left(T_{[b,1]}^*aT_{[b,1]}\right) = 0$. Our hypothesis now gives that $T_{[b,1]}^*aT_{[b,1]} = 0$. 
	
	Since $\Theta([b,1], D_{bP})$ is a bisection and $\xi$ is in its range, there is a unique $\gamma_b\in \Theta([b,1], D_{bP})$ with range $\xi$. Now the function $T_{[b,1]}^*aT_{[b,1]} = 1_{\Theta([b,1], D_{bP})^{-1}}a1_{\Theta([b,1], D_{bP})}$ applied to $\gamma_b^{-1}\gamma\gamma_b$ is
	\[
	0 = T_{[b,1]}^*aT_{[b,1]}(\gamma_b^{-1}\gamma\gamma_b) = a(\gamma)
	\]
	Since the piece \eqref{eq:disjointification} we chose was arbitrary, we must have $a = 0$ and we are done.
\end{proof}
\begin{rmk} We compare Theorem~\ref{th:LCMmain} with previous uniqueness theorems for semigroup C*-algebras: \cite[Theorem~7.4]{BLS16} in the LCM case and \cite[Theorem~5.1]{LS22} in the group-embeddable case. In the nuclear case, $\Q(P)$ is a quotient of the C*-algebras considered for their uniqueness theorems, so it is natural that ours requires fewer conditions. We also note that, in contrast to the above results, our subalgebra $\Q_c(P)$ does not contain the subalgebra $C^*(e_X: X\in \J(P))$ except in the case where every element of $P$ is invertible.
\end{rmk} 

\begin{rmk}
	If $p\in P_c$, then $\{p\}$ is a foundation set and so the defining relations for $\Q(P)$ imply that $s_p$ is a unitary. So $\Q(P)$ is generated by the group of unitaries corresponding to these elements. In the case $P_c = \{1\}$ we get that $\Q(P)$ is simple, recovering \cite[Theorem~4.12]{StLCM}. 
\end{rmk}
\section{Subshifts}\label{sec:subshifts}

In this section we apply our uniqueness theorem to the tight C*-algebra of an inverse semigroup $\s_\X$ we associated to a subshift $\X$ in \cite{ShiftISG}. There we erroneously claimed that $\Ct(\s_\X)$ is isomorphic to the Carlsen-Matsumoto algebra $\OX$. Here we use our uniqueness theorem to show that it is in general isomorphic to a certain quotient generated by translation operators on $\ell^2(\X)$. 

Subshifts are a class of tractable dynamical systems. If $\A$ is a finite set (called the {\em alphabet}) given the discrete topology, the product space $\A^\NN$ is a compact metrizable totally disconnected space. The shift map $\sigma: \A^\NN\to \A^\NN$ given by
\[
\sigma(x)_i = x_{i+1}
\]
is a continuous map. Any closed subspace $\X\subseteq \A^\NN$ satisfying $\sigma(\X)\subseteq \X$ is called a {\em one-sided subshift} or simply a {\em subshift}. For $n\geq 1$, the set $\A^n$ is called the set of {\em words of length $n$}. We call $\epsilon$ the {\em empty word} and set $\A^0:= \{\epsilon\}$. If $\alpha\in \A^n$, we write $|\alpha| = n$ and call this the length of $\alpha$. The set of all words will be written as $\A^* = \cup_{i=0}^\infty \A^i$. For $\alpha \in \A^*$, define $C(\alpha) = \{x\in \X : x_i = \alpha_i, i = 1, \dots, |\alpha|\}$. Sets of this form are called {\em cylinder sets}; they are clopen subsets of $\X$ and form a basis for its topology. For $\alpha,\beta\in \A^*$, let
\begin{align*}
C(\alpha, \beta)&:= C(\beta)\cap\sigma^{-|\beta|}\left(\sigma^{|\alpha|}(C(\alpha))\right)\\
& = \{\beta x \in \X: \alpha x\in \X\}.
\end{align*}
Since $\sigma$ is a closed map by the closed map lemma, $C(\alpha, \beta)$ is closed. For a finite set $F\fs \A^*$ and $\alpha\in \A^*$, define
\[
C(F; \alpha) = \bigcap_{f\in F} C(f, \alpha).
\]
Each such set is closed as well. For each $a\in \A$, define $s_a: C(a, \epsilon)\to C(a)$ by $s_a(x) = ax$. Then each $s_a$ is a bijection between subsets of $\X$, and so an element of $\I(\X)$. Likewise, for each nonempty word $\alpha\in\A^*$ we define $s_\alpha: C(\alpha, \epsilon) \to C(\alpha)$ by $s_\alpha(x) = \alpha x$, and with the product in $\I(\X)$ we have $s_\alpha = s_{\alpha_1}s_{\alpha_2}\cdots s_{\alpha_{|\alpha|}}$. In \cite{ShiftISG} we defined $\s_\X$ to be the inverse semigroup generated by the $\{s_\alpha\}_{\alpha\in\A^*}$ (with $s_\epsilon$ taken to be the identity element). We proved that 
\begin{equation}\label{eq:SXdef}
\s_\X = \{s_\alpha E(F;\gamma) s_\beta^*: \alpha, \beta, \gamma\in \A^*, F\fs\A^* \}\cup\{0\}
\end{equation}
where $E(F;\alpha) := \id_{C(F;\alpha)}$, and that $E(\s_\X) = \{E(F;\gamma) : \gamma\in \A^*, F\fs\A^* \}\cup\{0\}$. There it is also noted that 
\[
E(F, \alpha) = s_\alpha \left(\prod_{f\in F}s_f^*s_f\right)s_\alpha^*
\]
which implies that if we have $s = s_\alpha E(F;\gamma) s_\beta^*$ and $\alpha = \beta$, then $s$ is an idempotent. We will use this fact in what follows.

Unfortunately, many of the results in \cite{ShiftISG} are incorrect as stated. The problems all stem from \cite[Remark~3.4]{ShiftISG}, which says that the form $s_\alpha E(F;\gamma) s_\beta^*$ is essentially unique. Specifically it says (erroneously) that if $s_\alpha E(F;\gamma) s_\beta^* = s_\delta E(G;\tau) s_\sigma^*$ and the elements are written in {\em standard form} (meaning $\alpha_{|\alpha|}\neq \beta_{|\beta|}$ and $\delta_{|\delta|}\neq \tau_{|\tau|}$), then $\alpha = \delta$ and $\beta = \tau$. This statement is seen to be false if one considers the full shift on one letter $\A = \{a\}$; the functions $s_a$, $s_{aa}^*$ and $E(\{a\}, a)$ are all in standard form. We show in a correction to \cite{ShiftISG}\footnote{Submitted to J. Algebra and appended to the arXiv entry \href{https://arxiv.org/abs/1505.01766}{1505.01766}} that all the results in \cite{ShiftISG} become true if one assumes that $\X$ satisfies {\em condition (I)} of Matsumoto \cite[Section~5]{Mats99}. 

One of the incorrect results is \cite[Theorem~4.8]{ShiftISG}, which states that a certain C*-algebra associated to $\X$, denoted $\OX$ and called the {\em Carlsen-Matsumoto algebra of $\X$} \cite{CS07, Ca08}, is isomorphic to $\Ct(\s_\X)$. Having now established that this isomorphism is not true in general, we turn to the problem of identifying what $\Ct(\s_\X)$ is. Specifically, is there any other C*-algebra associated to a subshift that is isomorphic to $\Ct(\s_\X)$ for subshifts $\X$ which might not satisfy (I)?

The answer comes from another work of Matsumoto. In \cite[Lemma~4.1]{Mats00}, he considers $\ell^2(\X)$, the Hilbert space spanned by an orthonormal basis $\{\delta_x\}_{x\in \X}$ indexed by $\X$. Then for each $a\in \A$, $S_a$ is defined to be the operator 
\begin{equation}\label{eq:SaOperatorDef}
S_a\delta_x= \begin{cases}
\delta_{ax}&\text{if }ax\in \X\\
0&\text{otherwise}\end{cases}.
\end{equation}
The C*-algebra generated by these operators is not given a name in \cite[Lemma~4.1]{Mats00}, so we will denote it $C^*(S_a:a\in \A)$. In what remains of this section, we will use our uniqueness Theorem~\ref{thm:isguniqueness} to prove that $\Ct(\s_\X)\cong C^*(S_a:a\in \A)$.

In what follows, for an element $s\in \I(\X)$, we write $\dom(s)$ for the domain of $s$ and $\ran(s)$ for the range of $s$. 
\begin{lem}\label{lem:shiftfixonepoint}
	Let $\X$ be a subshift. If $s = s_\alpha E(F;v) s_\beta^*$ and $|\alpha| \neq |\beta|$, then $s$ fixes at most one point. Consequently, $\s_\X$ satisfies \eqref{eq:H}.
\end{lem}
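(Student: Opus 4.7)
The plan is to unpack what it means for $s = s_\alpha E(F;v) s_\beta^*$ to fix a point of $\X$, and then use the length mismatch $|\alpha| \neq |\beta|$ to pin that point down uniquely. A fixed point $x \in \X$ must lie in $\dom(s)$, forcing $x = \beta y$ with $y \in C(F;v)$ (so $y = v z$ for some $z$) and $\alpha y \in \X$. Under these conditions $s(x) = \alpha y = \alpha v z$, so the equation $s(x) = x$ reduces to the equality of infinite sequences $\alpha v z = \beta v z$ in $\X$.

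Assuming $|\alpha| > |\beta|$ without loss of generality (the other case follows by passing to $s^*$, which has the same fixed points), comparing the first $|\alpha|$ letters of the two sides shows that $\beta$ is a prefix of $\alpha$; write $\alpha = \beta w$ where $k := |\alpha| - |\beta| \geq 1$. Cancelling $\beta$ from the equation yields $w v z = v z$, which forces $v z$ to equal the periodic sequence $w w w \cdots$ obtained by concatenating copies of $w$. Since $w$ is determined by $\alpha$ and $\beta$, so is $v z$, and hence so is $x = \beta v z$, proving that $s$ has at most one fixed point.

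For the consequence, I need to exhibit a finite cover of $\J_s$ for every $s = s_\alpha E(F;v) s_\beta^* \in \s_\X$. If $\alpha = \beta$, then by the observation preceding the lemma $s$ is itself an idempotent and $\{s\}$ covers $\J_s$. If $\alpha \neq \beta$ but $|\alpha| = |\beta|$, then comparing the first $|\alpha|$ letters in $\alpha v z = \beta v z$ forces $\alpha = \beta$, a contradiction, so $s$ has no fixed points; if $|\alpha| \neq |\beta|$, the first part gives at most one. In either situation the set of fixed points of $s$ has at most one element, and since every nonzero $e \in \J_s$ is of the form $\id_{C(G;u)}$ with $C(G;u)$ contained in this fixed-point set, $\J_s$ contains at most one nonzero element (namely $\id_{\{x\}}$, if it happens to lie in $\s_\X$), which forms a finite cover. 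The only genuinely interesting step is the periodicity observation in the second paragraph, turning the mild-looking equation $w v z = v z$ into the unique determination of $v z$; once this is in hand the rest is routine bookkeeping.
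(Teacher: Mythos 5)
Your proposal is correct and follows essentially the same route as the paper: a fixed point forces $\beta$ to be a prefix of $\alpha$ (after reducing to $|\alpha|>|\beta|$ via $s^*$), the tail must then be the periodic sequence $w^\infty$ and hence unique, and condition \eqref{eq:H} follows because every nonzero idempotent below $s$ has domain inside the at-most-one-point fixed set, so $\J_s$ has at most one nonzero element. The only cosmetic difference is that you phrase the \eqref{eq:H} step directly in terms of domains rather than via the equation $se=e$ as the paper does.
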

\begin{proof}
	Write $s = s_\alpha E(F;v)s_\beta^*$ for $F\fs \A^*$ and $\alpha, \beta, v\in \A^*$, and suppose that $|\alpha|>|\beta|$. Take an arbitrary $\beta x$ in the domain of $s$ and suppose  $s_\alpha E(F;v) s_\beta^*(\beta x) = \beta x$. Then $\alpha x = \beta x$, and so $\alpha = \beta\gamma$ for some $\gamma\in \A^*$. Thus $\beta x = \beta \gamma x$, implying $x = \gamma x$. This implies $x = \gamma^\infty$, and so our arbitrary point in the domain of $s$ must be $\beta\gamma^\infty$. The case $|\beta |>|\alpha|$ follows from the fact that a point is fixed by $s$ if and only if it is fixed by $s^*$. 
	
	Now suppose we have  $e\in E(\s_\X)$ with $se = e$. Write $e = E(G; w)$ for $G\fs \A^*$ and $w\in \A^*$, and let $s$ be as above. If $|\alpha| = |\beta|$, then this implies $\alpha = \beta$ and $s$ is an idempotent, implying that $\{s\}$ is a cover for $\J_s$. Otherwise, let $wx\in E(G; w)$ be an arbitrary element. Then the equality of functions $se = e$ implies that $s(wx) = wx$, and so the above implies that $C(G; w)= \{wx\}$. Since $e$ was an arbitrary element of $\J_s$, we see that $\J_s$ has only one nonzero element, and so of course has a finite cover.	 
\end{proof}

In \cite[Lemma~4.4]{ShiftISG} (which we note is correct as stated), we proved that every ultrafilter in $E(\s_\X)$ is of the form $\xi_x = \{E(F;v): x\in C(F;v)\}$ for some $x\in \X$, and that each $x\in \X$ determines such an ultrafilter.
\begin{lem}\label{lem:ShiftFixedUltra}
	Let $x\in \X$ and let $\xi_x$ be the corresponding ultrafilter. Then $\theta_s(\xi_x) = \xi_{s(x)}$ for all $s\in \s_\X$ with $x\in \dom(s)$. 
\end{lem}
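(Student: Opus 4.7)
The plan is to use the maximality of ultrafilters to reduce the desired equality to a single containment. By \cite[Lemma~4.4]{ShiftISG}, both $\xi_x$ and $\xi_{s(x)}$ are ultrafilters in $E(\s_\X)$. The hypothesis $x\in \dom(s)$, combined with the fact that $s^*s\in E(\s_\X)$ is the identity map $\id_{\dom(s)}\in \I(\X)$, gives $s^*s\in \xi_x$, so $\xi_x\in D_{s^*s}$ and $\theta_s(\xi_x)$ is defined. The standard fact that the homeomorphism $\theta_s\colon D_{s^*s}\to D_{ss^*}$ restricts to a bijection between ultrafilters (cf.~the discussion around \cite[Proposition~3.5]{EP16}) then ensures $\theta_s(\xi_x)$ is itself an ultrafilter. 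Consequently, if I can show the containment $\theta_s(\xi_x)\subseteq \xi_{s(x)}$, maximality immediately yields equality.

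To establish that containment, I will take an arbitrary $e\in \xi_x$, write $e = E(F;v)$ so that $x\in C(F;v)$, and compute $ses^*$ in two ways. First, since $\s_\X$ is closed under the operation $e\mapsto ses^*$, this element lies in $E(\s_\X)$, so by \eqref{eq:SXdef} it has the form $E(G;w)$ for some $G\fs \A^*$ and $w\in \A^*$ (or it is $0$, in which case there is nothing to show). Second, because $\s_\X\subseteq \I(\X)$ and $e$ is the identity map on $C(F;v)$, a pointwise calculation of the composition $s\circ e\circ s^*$ in $\I(\X)$ gives
\[
ses^* = \id_{s(C(F;v)\cap \dom(s))}.
\]
Comparing the two descriptions yields $C(G;w) = s(C(F;v)\cap \dom(s))$. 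Since $x\in C(F;v)\cap \dom(s)$ by assumption, $s(x)\in C(G;w)$, i.e.\ $ses^* = E(G;w)\in \xi_{s(x)}$. As $\xi_{s(x)}$ is upward closed, this gives $\theta_s(\xi_x) = \{ses^*:e\in \xi_x\}^{\leqslant}\subseteq \xi_{s(x)}$, and the lemma follows.

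No serious obstacle is anticipated; the argument simply unpacks the concrete realisation of $\s_\X$ as a semigroup of partial bijections of $\X$. The only mild care required is to match the intrinsic description of $ses^*$ as a member of $E(\s_\X)$ (necessarily of the form $E(G;w)$) with its extrinsic description as a partial identity in $\I(\X)$, and this match is forced by $\s_\X\subseteq \I(\X)$.
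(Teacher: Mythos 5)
Your argument is correct and follows essentially the same route as the paper's proof: both show $s(x)\in\dom(ses^*)$ for each $e\in\xi_x$ (so $ses^*\in\xi_{s(x)}$), handle the upward closure, and then invoke maximality of the ultrafilter $\theta_s(\xi_x)$ to upgrade the containment $\theta_s(\xi_x)\subseteq\xi_{s(x)}$ to equality. The only cosmetic difference is that you compute $ses^*$ explicitly as a partial identity in $\I(\X)$, whereas the paper first reduces to $e\leqslant s^*s$ and tracks domains abstractly.
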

\begin{proof}
	Let $e\in \xi_x$, so that $x\in \dom(e)$. Since $x\in \dom(s)$, we can assume $e\leqslant s^*s$ (which implies $\dom(e) = \dom(se)$). Then $s(x)\in \dom(es^*)$, and since everything in $\ran(e)$ is in $\dom(s)$ we have $s(x) \in \dom(ses^*)$. This implies $ses^*\in \xi_{s(x)}$. Now if $f\geqslant ses^*$ for some $e\in \xi_x$, then $ses^* = s(es^*s)s^*$ with $es^*s\leqslant s^*s$, then by the above $s(x)\in \dom(ses^*)\subseteq \dom(f)$. This gives $\theta_s(\xi_x) \subseteq \xi_{s(x)}$. Since $\theta_s(\xi_x)$ is an ultrafilter this containment must be equality. 	
\end{proof}

\begin{lem}\label{lem:shiftisoIdem}
	Let $\X$ be a subshift. Then $\s_\X^\iso = E(\s_\X)$. 	
\end{lem}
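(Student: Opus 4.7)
The containment $E(\s_\X) \subseteq \s_\X^\iso$ is immediate from Lemma~\ref{lem:SisoISSG}, so the plan is to prove the reverse inclusion: given $s \in \s_\X^\iso$, I would show that $s$ acts as the identity on its domain and is therefore idempotent. The strategy is to translate the combinatorial weak-fixing condition defining $\s_\X^\iso$ into a dynamical statement about points of $\X$, using the ultrafilter description of $E(\s_\X)$ recalled in the excerpt.

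First I would invoke \cite[Lemma~4.9]{EP16}, which is already used repeatedly in the excerpt: because $s^*s$ is weakly fixed by $s$, the bisection $\Theta(s, D_{s^*s})$ is contained in $\iso(\gt(\s_\X))$, and hence every ultrafilter $\xi$ containing $s^*s$ satisfies $\theta_s(\xi) = \xi$. Next, by \cite[Lemma~4.4]{ShiftISG} (which the excerpt confirms is correct as stated), every ultrafilter in $E(\s_\X)$ has the form $\xi_x = \{E(F;v) : x \in C(F;v)\}$ for some $x \in \X$, and the condition $s^*s \in \xi_x$ is equivalent to $x \in \dom(s^*s) = \dom(s)$. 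One small observation I need is that $x \mapsto \xi_x$ is injective: since $\X \subseteq \A^\NN$ is Hausdorff and cylinders form a basis, any two distinct points differ along some finite prefix $v$, and the idempotent $E(\emptyset;v)$ then distinguishes the corresponding ultrafilters. Lemma~\ref{lem:ShiftFixedUltra} gives $\theta_s(\xi_x) = \xi_{s(x)}$; combining this with the fixed-filter statement and the injectivity just noted forces $s(x) = x$ for every $x \in \dom(s)$, so $s$ is the identity map on $\dom(s)$ and therefore lies in $E(\s_\X)$.

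I do not anticipate a genuine obstacle: the argument is a short chain of direct applications of Lemma~\ref{lem:SisoISSG}, \cite[Lemma~4.9]{EP16}, \cite[Lemma~4.4]{ShiftISG}, and Lemma~\ref{lem:ShiftFixedUltra}. The only point that needs a brief word of justification is the injectivity of $x \mapsto \xi_x$, which is immediate from the topology of $\A^\NN$.
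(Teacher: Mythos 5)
Your proof is correct and follows essentially the same route as the paper's: invoke \cite[Lemma~4.9]{EP16} to see that $\theta_s$ fixes every ultrafilter in $D_{s^*s}$, combine with \cite[Lemma~4.4]{ShiftISG} and Lemma~\ref{lem:ShiftFixedUltra} to conclude $s(x)=x$ for all $x\in\dom(s)$, hence $s$ is idempotent. You in fact streamline the paper's argument slightly by going directly from ``$s$ fixes its whole domain'' to ``$s$ is idempotent,'' bypassing the paper's case split on $|\alpha|$ versus $|\beta|$ and its appeal to Lemma~\ref{lem:shiftfixonepoint}, and you rightly flag the (easy) injectivity of $x\mapsto\xi_x$ that the paper uses implicitly.
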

\begin{proof}
	We always have $E(\s_\X)\subseteq \s_\X^\iso$, so suppose $s\in \s_\X^\iso$. Write $s = s_\alpha E(F;v)s_\beta^*$ for $F\fs \A^*$ and $\alpha, \beta, v\in \A^*$. Then $\theta_s$ fixes every tight filter in $D_{s^*s}$ by \cite[Lemma~4.9]{EP16}, and in particular fixes every ultrafilter. By Lemma~\ref{lem:ShiftFixedUltra}, $s$ must fix every $x\in \dom(s^*s)$. If $|\alpha| = |\beta|$ this implies $\alpha = \beta$ and so $s\in E(\s_\X)$. Otherwise, Lemma~\ref{lem:shiftfixonepoint} implies $\dom(s^*s)$ is only one point. But $\dom(s) = \dom(s^*s)$, which consists of one point that $s$ fixes. Thus $s$ is an idempotent function, and we are done.
\end{proof}
\begin{lem}\label{lem:covertojoin}
	Let $\X$ be a subshift, and suppose $C\subseteq E(\s_\X)$ is a cover of $e\in E(\s_\X)$. Then $\bigcup_{c\in C}\dom(c)= \dom(e)$.
\end{lem}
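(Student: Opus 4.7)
The plan is to translate the problem to the spectrum using the identification of ultrafilters in $E(\s_\X)$ with points of $\X$ given by \cite[Lemma~4.4]{ShiftISG}. The inclusion $\bigcup_{c\in C}\dom(c)\subseteq\dom(e)$ is immediate, since each $c\leqslant e$ means $c$ is the restriction of $e$ viewed inside $\I(\X)$, so $\dom(c)\subseteq\dom(e)$.

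For the reverse inclusion I fix $x\in\dom(e)$ and form the ultrafilter $\xi_x=\{f\in E(\s_\X):x\in\dom(f)\}$; note that $e\in\xi_x$. I aim to show that some $c\in C$ lies in $\xi_x$, since then $x\in\dom(c)$ is exactly what we want. If instead $c\notin\xi_x$ for every $c\in C$, maximality of the ultrafilter gives for each such $c$ an element $g_c\in\xi_x$ with $g_cc=0$. Taking a suitable finite meet $g:=e\cdot g_{c_1}\cdots g_{c_n}$ of such elements produces a nonzero element of $\xi_x$ that sits below $e$ and annihilates each $c_i$ in the meet, which contradicts $C$ being a cover of $e$ once one arranges for the meet to account for the entirety of $C$.

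The main technical point is justifying that finitely many $g_c$'s suffice to reach the contradiction; this is clear when $C$ is finite, which is the operative case in the paper's subsequent application of this lemma and matches the standard convention of using finite covers in the definition of tight representations (cf.\ \cite{Ex08}). With that in hand, some $c\in C$ belongs to $\xi_x$, giving $x\in\dom(c)$ and completing the reverse inclusion.
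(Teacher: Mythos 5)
Your argument is correct (for finite $C$) and takes a genuinely different route from the paper. The paper works directly with the topology of $\X$: given $x\in\dom(e)$, it applies the cover condition to the nonzero idempotents $e\cdot E(\emptyset;x_1\cdots x_n)\leqslant e$ to produce $c_n\in C$ together with points $y_n\in\dom(c_n)$ agreeing with $x$ to depth $n$, then pigeonholes on the finite set $C$ to extract a constant subsequence $c_{n_k}=c_0$ and concludes $x\in\dom(c_0)$ from closedness of $\dom(c_0)$. You instead run the order-theoretic argument: pass to the ultrafilter $\xi_x$ of \cite[Lemma~4.4]{ShiftISG}, note $e\in\xi_x$, use the standard characterization of ultrafilters (an ultrafilter contains every idempotent meeting all of its members, so $c\notin\xi_x$ yields $g_c\in\xi_x$ with $g_cc=0$), and contradict the cover property with the finite meet $e\,g_{c_1}\cdots g_{c_n}$, which is a nonzero element below $e$ killing every member of $C$. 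Your route isolates a fact valid in any inverse semigroup --- a finite cover of $e$ meets every ultrafilter containing $e$ --- and confines the subshift-specific input to the identification $c\in\xi_x\Leftrightarrow x\in\dom(c)$; the paper's route is more elementary but tied to the sequential structure of $\A^\NN$. You are also right to single out finiteness: both proofs use it essentially (the paper at the ``finite range'' step, you at the finite meet), and the statement genuinely fails for infinite covers --- on the full $2$-shift the family of all $\id_{C(\alpha)}$ with $\alpha\neq a^{|\alpha|}$ covers the identity, yet the union of their domains omits $a^\infty$ --- so the lemma must be read with $C\fs E(\s_\X)$, consistent with its sole use via the Donsig--Milan characterization of tightness \cite{DM14}, where covers are finite.
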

\begin{proof}
	Since $C$ is a cover of $e$, we have $c\leqslant e$ for all $c\in C$ and so $\dom(c)\subseteq \dom(e)$, implying $\bigcup_{c\in C}\dom(c)\subseteq \dom(e)$. So take $x\in \dom(e)$. For each $n\in \NN$, find $c_n\in C$ such that $c_nE(\emptyset,x_1\cdots x_n) \neq 0$. This implies that for each $n$, we can find $y_n\in \dom(c_n)$ such that $y_n$ agrees with $x$ up to position $n$. The sequence $(c_n)_{n\in\NN}$ has finite range, so has a constant subsequence $c_{n_k} = c_0$ for some $c_0\in C$. Then the sequence $y_{n_k}$ converges to $x$ and is contained in $\dom(c_0)$, a closed set. Thus $x\in \dom(c_0)$ and we are done.
\end{proof}

\begin{theo}\label{th:subshiftisg}
	Let $\X$ be a subshift, let $\s_\X$ be the inverse semigroup defined in \eqref{eq:SXdef}, and let $S_a$ be the operator given in \eqref{eq:SaOperatorDef}. Then $\Ct(\s_\X)$ is isomorphic to $C^*(S_a: a\in \A)$. 
\end{theo}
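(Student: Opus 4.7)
The plan is to realize the isomorphism via the natural tight representation of $\s_\X$ on $\ell^2(\X)$ and apply Theorem~\ref{thm:isguniqueness}. Since $\s_\X\subseteq\I(\X)$, the formula $\rho(s)\delta_x = \delta_{s(x)}$ for $x\in\dom(s)$ (and $0$ otherwise) extends the assignment $s_a\mapsto S_a$ to an inverse semigroup homomorphism $\rho:\s_\X\to B(\ell^2(\X))$, taking values in partial isometries, with $\rho(e)$ equal to the orthogonal projection onto $\ell^2(\dom(e))$ for each $e\in E(\s_\X)$. Tightness of $\rho$ is immediate from Lemma~\ref{lem:covertojoin}: for any cover $C$ of $e$, the join $\bigvee_{c\in C}\rho(c)$ is the projection onto $\ell^2\bigl(\bigcup_{c\in C}\dom(c)\bigr) = \ell^2(\dom(e)) = \rho(e)$. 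By the universal property of $\Ct(\s_\X)$, $\rho$ induces a surjective $*$-homomorphism $\pi:\Ct(\s_\X)\to C^*(S_a:a\in\A)$ with $\pi(T_s) = \rho(s)$.

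To prove that $\pi$ is injective, I would invoke Theorem~\ref{thm:isguniqueness}. Lemma~\ref{lem:shiftfixonepoint} furnishes \eqref{eq:H}, and Lemma~\ref{lem:shiftisoIdem} identifies $\s_\X^\iso = E(\s_\X)$, so it suffices to check injectivity of $\pi$ on $C^*(T_e:e\in E(\s_\X))\cong C_0(\Et(\s_\X))$. Under $\pi$, each generator $T_e = 1_{D_e}$ maps to the diagonal projection onto $\ell^2(\{x\in\X:\xi_x\in D_e\})$, and by continuity together with linearity a general $f\in C_0(\Et(\s_\X))$ is sent to the diagonal operator $\pi(f)\delta_x = f(\xi_x)\delta_x$. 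Hence $\pi(f) = 0$ forces $f$ to vanish on $\{\xi_x:x\in\X\} = \Eu(\s_\X)$, which is dense in $\Et(\s_\X)$ by definition; continuity of $f$ then forces $f = 0$.

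The main obstacle is that Theorem~\ref{thm:isguniqueness} closes the argument for $\Ct(\s_\X)$ only under weak containment of $\gt(\s_\X)$, whereas in general it only controls $*$-homomorphisms out of $C^*_r(\gt(\s_\X))$. To get around this, one would show that $\pi$ descends through the regular quotient $\Lambda:\Ct(\s_\X)\to C^*_r(\gt(\s_\X))$ to a map $\bar\pi:C^*_r(\gt(\s_\X))\to C^*(S_a:a\in\A)$ and apply the reduced version of the theorem to $\bar\pi$; the factoring is natural in view of the orbital decomposition $\ell^2(\X) = \bigoplus_{[x]}\ell^2([x])$ over $\s_\X$-orbits, by comparing each summand with a suitable quotient of the regular representation $\pi_{\xi_x}$. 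A separate argument---most naturally, amenability or weak containment for the subshift tight groupoid---would then be needed to identify $\Ct(\s_\X)$ with $C^*_r(\gt(\s_\X))$ and conclude the full isomorphism.
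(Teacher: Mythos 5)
Your construction of the representation $\rho$, the verification of tightness via Lemma~\ref{lem:covertojoin}, the reduction to the diagonal using Lemma~\ref{lem:shiftfixonepoint} and Lemma~\ref{lem:shiftisoIdem}, and the injectivity check on $C(\Et(\s_\X))$ via density of the ultrafilters $\{\xi_x\}$ all match the paper's argument in substance. But there is a genuine gap, and you have located it yourself without closing it: the entire argument hinges on weak containment for $\gt(\s_\X)$, and "a separate argument would then be needed" is not a proof. Without weak containment, Theorem~\ref{thm:isguniqueness} only controls maps out of $C^*_r(\gt(\s_\X))$; even if you carry out the proposed factorization $\pi = \bar\pi\circ\Lambda$ through the reduced algebra and show $\bar\pi$ is injective, you obtain $C^*_r(\gt(\s_\X))\cong C^*(S_a: a\in\A)$, and the stated isomorphism with $\Ct(\s_\X)$ still requires $\ker\Lambda = 0$. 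So the factorization detour, besides being only sketched, does not actually buy you anything the missing amenability statement would not already give.

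The paper fills this hole with a short but essential chain of citations: by \cite[Theorem~7.2]{Ca08} and \cite[Proposition~4.6]{ShiftISG} there is a surjection $\OX\to\Ct(\s_\X)$; $\OX$ is nuclear by \cite[Theorem~21]{CS07}; quotients of nuclear C*-algebras are nuclear, so $\Ct(\s_\X)$ and hence $C^*_r(\gt(\s_\X))$ are nuclear; nuclearity of the reduced algebra of a Hausdorff \'etale groupoid gives amenability of $\gt(\s_\X)$ (e.g.\ \cite[Theorem~5.6.18]{BO08}), hence weak containment. You should supply this (or some other proof of amenability) explicitly. Two smaller points: surjectivity of $\pi$ onto $C^*(S_a:a\in\A)$ deserves a word, since $\Ct(\s_\X)$ is generated by all of $\s_\X$ and one must see that the images of the idempotents $E(F;\gamma)$ and the unit land in $C^*(S_a:a\in\A)$ (the paper notes $\sum_{a\in\A}S_aS_a^*=1$ for this); and the claim that a general $f\in C_0(\Et(\s_\X))$ acts diagonally by $\pi(f)\delta_x = f(\xi_x)\delta_x$ is correct but should be justified by noting it holds on the dense span of the $1_{D_e}$ and passes to limits.
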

\begin{proof}
	By \cite[Theorem~7.2]{Ca08} and \cite[Proposition~4.6]{ShiftISG} (which we note is correct as stated), there is a surjective $*$-homomorphism $\OX\to \Ct(\s_\X)$. By \cite[Theorem~21]{CS07} $\OX$ is nuclear, and since quotients of nuclear C*-algebras are nuclear we conclude $\Ct(\s_\X)$ is nuclear, and for the same reason we conclude $C^*_r(\gt(\s_\X))$ is nuclear. Hence by \cite[Theorem~5.6.18]{BO08} (for example) $\gt(\s_\X)$ is amenable, implying that $C^*_r(\gt(\s_\X)) \cong \Ct(\s_\X)$. 
	
	It is clear that the map $\rho: \s_\X\to \B(\ell^2(\X))$ given by 
		\[
		\rho(s)\delta_x = \begin{cases} \delta_{s(x)}&x\in\dom(s)\\
		0&x\notin\dom(s)\end{cases}
	\]
	is a representation of $\s_\X$. To see that $\rho$ is tight, by \cite{DM14} we only need to check that whenever $C$ is a cover of $e\in E(\s_\X)$ we have that $\bigvee_{c\in C} \rho(c) = \rho(e)$. But this is immediate from Lemma~\ref{lem:covertojoin}. Thus $\rho$ induces a $*$-homomorphism $\pi: \Ct(\s_\X)\to \B(\ell^2(\X))$. We wish to show that $\pi$ is injective, and by  Theorem~\ref{thm:isguniqueness} and Lemma~\ref{lem:shiftisoIdem} it is enough to show that it is injective on the commutative subalgebra generated by the idempotents. 
	
	Suppose that $\pi: C(\Et(\s_\X))\to \B(\ell^2(\X))$ has nonzero kernel, and let $Y\subseteq \Et(\s_\X)$ be the corresponding closed subset such that $f\in \ker(\pi)$ if and only if $\left.f\right|_Y = 0$. Since $Y^c$ is open, we can find $D_e\subseteq Y$ for some $e\in E(\s_\X)$. Since $1_{D_e}\in C(\Et(\s_\X))$ and is 0 on $Y$, $\pi$ must send it to 0, which is absurd. Hence the restriction of $\pi$ to $C(\Et(\s_\X)) = \Ct(\s_\X^\iso)$ is injective, from which Theorem~\ref{thm:isguniqueness} implies $\pi$ is injective. The last thing to see is that the range of $\pi$ is $C^*(S_a: a\in \A)$, but this is immediate upon seeing that $\sum_{a\in \A}S_aS_a^*$ is the identity operator.
\end{proof}
We note that this aligns with \cite[Remark~7.5]{Ca08}.
\begin{rmk}
	Lemma~\ref{lem:shiftisoIdem} implies that $\gt(\s_\X)$ is the unit space $\Et(\s_X)$, and since Lemma~\ref{lem:shiftfixonepoint} implies $\gt(\s_\X)$ is Hausdorff, the unit space is closed. Hence Proposition~\ref{prop:SisoGiso}\ref{it3:SisoGiso} implies that $\iso(\gt(\s_\X))^\circ = \Et(\s_X) = \gt(\s_\X)$, so that $\gt(\s_\X)$ is effective. Hence, once effectiveness was established, one could have instead appealed to e.g. \cite[Proposition~5.5]{BCFS14}.
\end{rmk}

{\bf Acknowledgement:} We thank Toke Meier Carlsen for informing us that the results of \cite{ShiftISG} are not correct as stated and for conjecturing that Theorem~\ref{th:subshiftisg} might be the correct formulation. We also thank Chris Bruce and Sven Raum for comments and suggestions on a preliminary version of this work.
\bibliographystyle{acm}
\bibliography{E:/Dropbox/Research/bibtex}{}
%\bibliography{C:/Users/slear/Dropbox/Research/bibtex}{}

{\small 
\textsc{Carleton University, School of Mathematics and Statistics. 4302 Herzberg Laboratories} \texttt{cstar@math.carleton.ca} 
}
\end{document}